\newcommand{\+}{\nobreakdash-}
\renewcommand{\:}{\colon}
\newcommand{\rarrow}{\longrightarrow}
\newcommand{\ot}{\otimes}
\DeclareMathOperator{\Max}{Max}
\DeclareMathOperator{\Hom}{Hom}
\DeclareMathOperator{\Ext}{Ext}
\DeclareMathOperator{\Tor}{Tor}
\DeclareMathOperator{\coker}{coker}
\DeclareMathOperator{\pd}{pd}
\newcommand{\Modl}{{\operatorname{\mathsf{--Mod}}}}
\newcommand{\ctra}{{\operatorname{\mathsf{-ctra}}}}
\newcommand{\lrarrow}{\mskip.5\thinmuskip
      \relbar\joinrel\relbar\joinrel\rightarrow\mskip.5\thinmuskip}
\newcommand{\bu}{{\text{\smaller\smaller$\scriptstyle\bullet$}}}
\newcommand{\F}{\mathbb F}
\newcommand{\G}{\mathbb G}
\newcommand{\m}{\mathfrak m}
\newcommand{\n}{\mathfrak n}
\newcommand{\sD}{\mathsf D}
\newcommand{\boZ}{\mathbb Z}
\newcommand{\boQ}{\mathbb Q}
\theoremstyle{plain}
\newtheorem{thm}{Theorem}
\newtheorem{prop}[thm]{Proposition}
\newtheorem{lem}[thm]{Lemma}
\newtheorem{cor}[thm]{Corollary}
\theoremstyle{definition}
\newtheorem{rem}[thm]{Remark}
\newtheorem{dfn}[thm]{Definition}
\newtheorem{setup}[thm]{Setup}
\newcommand{\Section}[1]{\medskip\section{#1}\smallskip}
\begin{document}

\title{Flat commutative ring epimorphisms \\
of almost Krull dimension zero}

\author{Leonid Positselski}

\address{Institute of Mathematics of the Czech Academy of Sciences \\
\v Zitn\'a~25, 115~67 Praha~1 (Czech Republic); and
\newline\indent Laboratory of Algebra and Number Theory \\
Institute for Information Transmission Problems \\
Moscow 127051 (Russia)}

\email{positselski@math.cas.cz}

\begin{abstract}
 We consider flat epimorphisms of commutative rings $R\rarrow U$
such that, for every ideal $I\subset R$ for which $IU=U$,
the quotient ring $R/I$ is semilocal of Krull dimension zero.
 Under these assumptions, we show that the projective dimension of
the $R$\+module $U$ does not exceed~$1$.
 We also describe the Geigle--Lenzing perpendicular subcategory
$U^{\perp_{0,1}}$ in $R\Modl$.
 Assuming additionally that the ring $U$ and all the rings $R/I$ are
perfect, we show that all flat $R$\+modules are $U$\+strongly flat.
 Thus we obtain a generalization of some results of the paper~\cite{BP},
where the case of the localization $U=S^{-1}R$ of the ring $R$
at a multiplicative subset $S\subset R$ was considered.
\end{abstract}

\maketitle

\tableofcontents

\section*{Introduction}
\smallskip

 Let $R$ be a commutative integral domain with the field of
fractions~$Q$.
 It was discovered by Matlis~\cite{Mat} that certain commutative and
homological algebra constructions behave much better when the projective
dimension of the $R$\+module $Q$ does not exceed~$1$.
 Commutative domains $R$ with this property are now known as
\emph{Matlis domains}~\cite{FS}.
 More generally, the same observation applies to the case when $R$ is
a commutative ring and $Q$ is its full ring of quotients, that is
$Q=S^{-1}R$, where $S\subset R$ is the set of all regular elements
in~$R$~\,\cite{Mat2}.

 Even more generally, the case of a multiplicative subset $S\subset R$
consisting of (some) regular elements in $R$ was considered by
Angeleri H\"ugel, Herbera, and Trlifaj in the paper~\cite{AHT}, where
a number of conditions equivalent to $\pd_R S^{-1}R\le 1$ was listed.
 In particular, the projective dimension of the $R$\+module $Q=S^{-1}R$
does not exceed~$1$ if and only if $Q\oplus Q/R$ is a $1$\+tilting
$R$\+module, if and only if two natural notions of $S$\+divisibility
for $R$\+modules coincide, and if and only if $Q/R$ is a direct sum of
countably presented $R$\+modules.
 The next step, towards the study of localizations $S^{-1}R$ where
a multiplicative subset $S\subset R$ is allowed to contain some
zero-divisors in $R$, was made in the present author's
papers~\cite[Section~13]{Pcta} and~\cite{PMat,PSl2,BP}.

 From the contemporary point of view, it appears that the maximal
natural generality in this line of thought is achieved by considering
\emph{ring epimorphisms} $u\:R\rarrow U$.
 In this context, the condition that the projective dimension of
the (left or right) $R$\+module $U$ does not exceed~$1$ continues to
play an important role.
 In particular, for an injective homological ring epimorphism~$u$
(of associative, not necessarily commutative rings) a number of
homologically relevant conditions equivalent to $\pd_R U\le 1$ were 
listed in the paper of Angeleri H\"ugel and
S\'anchez~\cite[Theorem~3.5]{AS}.

 As a general rule, all the above-mentioned results allow to obtain
nice corollaries of the projective dimension~$1$ condition, but none
of them provide a workable technology for \emph{proving}
this condition.
 How does one show that $\pd_R S^{-1}R\le 1$, for a specific
multiplicative subset $S$ in a commutative ring~$R$\,?
 Or more generally that $\pd_R U\le 1$, for a specific ring epimorphism
$u\:R\rarrow U$\,?

 Notice first of all that $S^{-1}R$ is always a \emph{flat} $R$\+module,
while for a ring epimorphism $R\rarrow U$ the (left or right)
$R$\+module $U$ does not have to be flat.
 The following condition is usually imposed on a ring epimorphism,
however: a ring epimorphism $u\:R\rarrow U$ is said to be
\emph{homological} if $\Tor^R_n(U,U)=0$ for all $n\ge1$.
 In the context of commutative rings, the following recent result of
Bazzoni and the present author is relevant in this connection:
\emph{if $R\rarrow U$ is an epimorphism of commutative rings such that\/
$\Tor^R_1(U,U)=0$ and $\pd_RU\le1$, then $U$ is a flat
$R$\+module}~\cite[Theorem~5.2]{BP2}.
 In a sense, this means that one can restrict oneself to flat
epimorphisms when studying the projective dimension~$1$ condition
for epimorphisms of commutative rings.

 Flat ring epimorphisms can be described in terms of \emph{Gabriel
filters} (otherwise known as \emph{Gabriel
topologies})~\cite[Chapters~VI and~IX\+-XI]{St}.
 To every epimorphism of associative rings $u\:R\rarrow U$ such that
$U$ is a flat left $R$\+module, one assigns the set $\G$ of all right
ideals $I\subset R$ such that $R/I\ot_RU=0$, or equivalently, $IU=U$.
 The ring $U$ then can be recovered as the \emph{ring of quotients}
$U=R_\G$ of the ring $R$ with respect to the Gabriel filter~$\G$.
 In particular, if $R$ is commutative and $U=S^{-1}R$, then $\G$ is
the set of all ideals in $R$ intersecting~$S$.

 The simplest class of ring epimorphisms $R\rarrow U$ for which
the projective dimension of the $R$\+module $U$ does not exceed~$1$
is the following one.
 Let $S\subset R$ be a \emph{countable} multiplicative subset.
 Then $\pd_R S^{-1}R\le 1$.
 A far-reaching generalization of this elementary observation was
obtained in the paper~\cite{Pcoun}.
 Specifically, \emph{if $u\:R\rarrow U$ is a left flat ring epimorphism
and the related Gabriel filter of right ideals in $R$ has a countable
base, then the projective dimension of the flat left $R$\+module $U$
does not exceed~$1$} \cite[Theorem~8.5]{Pcoun}.
 The proof of this result given in~\cite{Pcoun} uses the fundamental
concept of the abelian category of \emph{contramodules} over
a topological ring (specifically, over the completion of $R$
with respect to~$\G$).

 Another basic observation is that $\pd_R S^{-1}R\le 1$ whenever $R$
is a Noetherian commutative ring of Krull dimension~$1$.
 In fact, any flat module over a Noetherian commutative ring of Krull
dimension~$\le1$ has projective dimension~$\le1$
\,\cite[Corollaire~II.3.2.7]{RG}.
 A contramodule-based proof of this assertion was suggested
in~\cite[Corollary~13.7 and Remark~13.10]{Pcta}.
 Building up on the techniques of~\cite{Pcta}, Bazzoni and the present
author proved the following result in~\cite[Theorem~6.13]{BP}:
\emph{for any commutative ring $R$ with a multiplicative subset
$S\subset R$ such that $R/sR$ is a semilocal ring of Krull dimension
zero for every $s\in S$, one has} $\pd_R S^{-1}R\le1$.

 In this paper, we generalize some results of~\cite{BP} to the case of
a flat epimorphism of commutative rings $u\:R\rarrow U$.
 Let $\G$ denote the Gabriel filter of ideals in $R$ related to~$u$.
 We show that $\pd_RU\le1$ \emph{whenever the quotient ring $R/I$ is
semilocal of Krull dimension zero for every ideal $I\in\G$}.
 The argument is based on the notion of \emph{$I$\+contramodule
$R$\+modules} for an ideal $I$ in a commutative ring~$R$.
 This result of ours has already found its uses in the work of Bazzoni
and Le~Gros on envelopes and covers in the tilting cotorsion pairs
related to $1$\+tilting modules over commutative rings;
see~\cite[Remark~8.6 and Theorem~8.7]{BG1} and~\cite[Remark~8.2 and
Theorem~8.17]{BG2}.

 The proofs of the $\pd_RU\le1$ theorems in the papers~\cite{Pcoun},
\cite{BP}, and the present paper follow a single common approach,
which can be described in very general terms as follows.
 Given a ring epimorphism $u\:R\rarrow U$, one considers the two-term
complex $K_u^\bu=(R\to U)$.
 When $R$ and $U$ are associative rings, $K_u^\bu$ is a complex of
$R$\+$R$\+bimodules; in the commutative case, it is simply a complex
of $R$\+modules.
 To any (left) $R$\+module $B$, one assigns the sequence of (left)
$R$\+modules $\Ext^n_R(K_u^\bu,B)=\Hom_{\sD(R\Modl)}(K_u^\bu,B[n])$
of homomorphisms in the derived category $\sD(R\Modl)$, indexed by
the integers~$n\ge0$.
 In particular, for $n\ge2$ there is a natural $R$\+module isomorphism
$\Ext^n_R(K_u^\bu,B)\simeq\Ext^n_R(U,B)$ \cite[Lemma~2.1(b)]{BP2}.

 In each of the three settings mentioned in the italicized phrazes
in the previous paragraphs where the results of the papers~\cite{Pcoun},
\cite{BP}, and the present paper are discussed, the argument concerning 
the projective dimension $\pd_RU$ proceeds as follows.
 In order to prove that $\Ext^n_R(U,B)=0$ for $n\ge2$, one shows that
the two classes of $R$\+modules of the form $\Ext^n_R(K_u^\bu,B)$ and
$\Ext^n_R(U,B)$ (where $n\ge\nobreak0$ and $B\in R\Modl$) only
intersect at zero.
 The two kinds of Ext modules just have incompatible properties.
 In fact, $\Ext^n_R(U,B)$ is the underlying $R$\+module of
a $U$\+module.
 The $R$\+modules $\Ext^n_R(K_u^\bu,B)$ are described in terms of some
kind of contramodules (it depends on the setting which specific
contramodule category is more convenient to work with).
 One wants to show that \emph{no} nonzero $R$\+module of the form
$C=\Ext^n_R(K_u^\bu,B)$ admits an extension of its $R$\+module
structure to a $U$\+module structure.
 In fact, one usually proves that $\Hom_R(U,C)=0$; this is certainly
enough.

 The \emph{Geigle--Lenzing perpendicular subcategory} $U^{\perp_{0,1}}$
to the $R$\+module $U$ in the category of $R$\+modules $R\Modl$
consists all $R$\+modules $C$ such that $\Hom_R(U,C)=0=\Ext^1_R(U,C)$
\,\cite{GL}.
 We use the name \emph{$u$\+contramodules} for $R$\+modules
$C\in U^{\perp_{0,1}}$, and the notation $R\Modl_{u\ctra}=
U^{\perp_{0,1}}\subset R\Modl$ for the full subcategory formed by
such modules.
 Another result of this paper is a description of the abelian category
$R\Modl_{u\ctra}$ in terms of the abelian categories of
$\m$\+contramodule $R$\+modules, where $\m$~ranges over the maximal
ideals of $R$ belonging to~$\G$.
 The assumption that $R/I$ is semilocal of Krull dimension zero
for all $I\in\G$ is made here.

 Finally, let us recall that a module $F$ over a commutative domain~$R$
is said to be \emph{strongly flat} if it is a direct summand of
an $R$\+module $G$ appearing in a short exact sequence of $R$\+modules
$0\rarrow V\rarrow G\rarrow W\rarrow 0$ in which $V$ is a free
$R$\+module and $W$ is a $Q$\+vector space~\cite{Trl,BS}.
 A series of generalizations of this concept was developed in
the papers~\cite{FS2,BP,PSl2,Pcoun}.
 We refer to the introduction to~\cite{PSl2} for a detailed discussion
with further references.

 In particular, let $R$ be a commutative ring and $S\subset R$
be a multiplicative subset.
 Then an $R$\+module $F$ is called \emph{$S$\+strongly flat}~\cite{BP}
if it is a direct summand of an $R$\+module $G$ appearing in a short
exact sequence of $R$\+modules $0\rarrow V\rarrow G\rarrow W\rarrow 0$
in which $V$ is a free $R$\+module and $W$ is a free $S^{-1}R$\+module.
 In the terminology of~\cite{BP}, a ring $R$ is said to be
\emph{$S$\+almost perfect} if $S^{-1}R$ is a perfect ring and
$R/sR$ is a perfect ring for every $s\in S$.
 According to~\cite[Theorem~7.9]{BP}, \,$R$ is $S$\+almost perfect if
and only if all flat $R$\+modules are $S$\+strongly flat.

 More generally, given a left flat epimorphism of associative rings
$u\:R\rarrow U$, a left $R$\+module $F$ is called \emph{$U$\+strongly
flat}~\cite[Section~9]{Pcoun} if it is a direct summand of
a left $R$\+module $G$ appearing in a short exact sequnce of left
$R$\+modules $0\rarrow V\rarrow G\rarrow W\rarrow 0$ in which $V$ is
a free left $R$\+module and $W$ is a free left $U$\+module.

 Let $u\:R\rarrow U$ be a flat epimorphism of commutative rings and
$\G$ be the related Gabriel filter of ideals in~$R$.
 In the terminology of~\cite[Sections~6 and~8]{BG2}, a ring $R$ is
said to be \emph{$\G$\+almost perfect} if $U=R_\G$ is a perfect ring
and $R/I$ is a perfect ring for all $I\in\G$.
 We show that all flat $R$\+modules are $U$\+strongly flat whenever $R$
is $\G$\+almost perfect (the converse assertion also holds).
 More generally, in the assumption that $R/I$ is semilocal of Krull
dimension zero for all $I\in\G$, we characterize $U$\+strongly flat
$R$\+modules $F$ as flat $R$\+modules for which the $U$\+module
$U\ot_RF$ is projective and the $R/I$\+modules $F/IF$ are projective.

\subsection*{Acknowledgement}
 I~am grateful to Silvana Bazzoni and Michal Hrbek for numerous very 
helpful discussions and communications.
 The author is supported by the GA\v CR project 20-13778S and
research plan RVO:~67985840.

\Section{$\F$-h-locality} \label{h-locality-secn}

 Recall that a commutative ring $T$ is said to be \emph{semilocal} if
the set of all maximal ideals in $T$ is finite.
  The ring $T$ is said to have \emph{Krull dimension zero} if all
the prime ideals in $T$ are maximal.
 The following description is standard and easy.

\begin{lem} \label{semilocal-nil-lemma}
\textup{(a)} A commutative ring $T$ is isomorphic to a finite product of local rings if and only if $T$ is semilocal with the additional
property that every prime ideal of $T$ is contained in a unique
maximal ideal. \par
\textup{(b)} A commutative ring $T$ is semilocal of Krull dimension zero
if and only if it is isomorphic to a finite product of local rings
$T_1\times\dotsb\times T_m$ such that, for every $1\le j\le m$,
the maximal ideal in $T_j$ consists of nilpotent elements. \qed
\end{lem}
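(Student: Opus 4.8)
The plan is to reduce part~(b) to part~(a) and to concentrate on proving~(a); the heart of the matter is the ``if'' direction of~(a), which I would establish by decomposing the prime spectrum $\operatorname{Spec}T$ into clopen pieces and reading off idempotents.

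For the ``only if'' direction of~(a) I would simply compute the spectrum of a finite product. If $T=T_1\times\dots\times T_m$ with each $T_j$ local of maximal ideal $\mathfrak n_j$, then the complete orthogonal system of idempotents $e_1,\dots,e_m$ (with $e_1+\dots+e_m=1$) forces every prime $\mathfrak p\subset T$ to have the form $T_1\times\dots\times\mathfrak q_j\times\dots\times T_m$ for a unique index~$j$ and a prime $\mathfrak q_j\subset T_j$. Hence the maximal ideals are exactly the $m$ ideals obtained by taking $\mathfrak q_j=\mathfrak n_j$, so $T$ is semilocal, and a prime of index~$j$ is contained only in the maximal ideal of index~$j$, which gives the required uniqueness.

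For the ``if'' direction, let $\mathfrak m_1,\dots,\mathfrak m_m$ be the maximal ideals of~$T$ and set $U_j=\{\mathfrak p\in\operatorname{Spec}T:\mathfrak p\subseteq\mathfrak m_j\}$. Since every prime lies in some maximal ideal, the sets $U_j$ cover $\operatorname{Spec}T$, and the uniqueness hypothesis makes them pairwise disjoint. The key step is to observe that each $U_j$ is \emph{closed}: if $\mathfrak p\in U_j$ and $\mathfrak q\supseteq\mathfrak p$ is prime, then $\mathfrak q$ lies in some $\mathfrak m_{j'}$, whence $\mathfrak p\subseteq\mathfrak m_{j'}$, and uniqueness forces $j'=j$, so $\mathfrak q\in U_j$; thus $U_j$ is stable under specialization, i.e.\ closed in $\operatorname{Spec}T$. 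As there are finitely many of them forming a partition, each $U_j$ is the complement of the union of the others and hence also open. A finite clopen partition of $\operatorname{Spec}T$ corresponds to a complete system of orthogonal idempotents, yielding $T\cong\prod_j Te_j$ with $\operatorname{Spec}(Te_j)\cong U_j$; since $U_j$ contains the single maximal ideal $\mathfrak m_j$, each factor $Te_j$ is local, as required.

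Finally, (b) would follow from~(a): Krull dimension zero means every prime is maximal, hence trivially contained in a unique maximal ideal, so~(a) applies and gives $T\cong T_1\times\dots\times T_m$ with each $T_j$ local; each $T_j$ again has dimension zero, so its maximal ideal is its only prime and therefore equals its nilradical, i.e.\ consists of nilpotents. The converse is immediate, since a local ring with nilpotent maximal ideal has that ideal as its unique prime. I expect the only genuine obstacle to be the closedness of the sets $U_j$ and the passage from the clopen partition to the idempotent decomposition; both are standard facts about spectral spaces, the first being exactly the point where the ``unique maximal ideal'' hypothesis is used.
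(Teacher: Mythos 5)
Your overall strategy---partition $\operatorname{Spec}T$ into the pieces $U_j$ of primes lying under the respective maximal ideals, show the pieces are clopen, and read off a complete orthogonal system of idempotents---is viable, and the paper itself offers no proof to compare with (the lemma is declared standard and the proof is omitted). However, there is a genuine gap at the step you yourself single out as the crux: the inference ``thus $U_j$ is stable under specialization, i.e.\ closed in $\operatorname{Spec}T$'' is not valid. Stability under specialization does \emph{not} imply closedness: the set of all maximal ideals of $\mathbb Z$ is stable under specialization in $\operatorname{Spec}\mathbb Z$, yet it is dense and not closed. In a spectral space a subset is closed if and only if it is stable under specialization \emph{and} pro-constructible, so your argument, which uses only the specialization order, is missing an essential input; a finite partition of a spectral space into specialization- and generization-stable pieces need not be clopen in general (consider a singleton non-isolated point in a Boolean spectral space and its complement).

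The gap can be closed in two ways. Either observe that $U_j$ is the image of $\operatorname{Spec}T_{\mathfrak m_j}\rarrow\operatorname{Spec}T$, hence pro-constructible, and combine this with your specialization argument. Or, more elementarily, show that $U_j=V(I_j)$ for $I_j=\ker(T\to T_{\mathfrak m_j})$. Indeed, for $i\ne j$ no prime is contained in both $\mathfrak m_i$ and $\mathfrak m_j$, so the multiplicative set $(T\setminus\mathfrak m_i)(T\setminus\mathfrak m_j)$ must contain $0$ (otherwise the localization at it would be nonzero and its contracted prime would lie in both maximal ideals); hence there exist $s\notin\mathfrak m_j$ and $t\notin\mathfrak m_i$ with $st=0$, and in particular $t\in I_j$. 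Now any prime $\mathfrak q\supseteq I_j$ contained in some $\mathfrak m_i$ with $i\ne j$ would contain $t$, contradicting $t\notin\mathfrak m_i$; together with the easy inclusion $U_j\subseteq V(I_j)$ this gives $U_j=V(I_j)$, so each $U_j$ is closed and, being one member of a finite closed partition, clopen. The remainder of your proposal---the computation of $\operatorname{Spec}$ of a finite product for the ``only if'' of~(a), the dictionary between clopen partitions and idempotents, and the reduction of~(b) to~(a)---is correct as written.
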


 A \emph{filter\/ $\F$ of ideals} (or a ``linear topology'') in
a commutative ring $R$ is a set of ideals in $R$ such that $R\in\F$,
and $K\supset I\cap J$, \ $I$, $J\in\F$ implies $K\in\F$.

 Let $R$ be a commutative ring with a filter $\F$ of ideals.
 An $R$\+module $N$ is said to be \emph{$\F$\+torsion} if, for every
element $x\in N$, the annihilator of~$x$ in $R$ belongs to~$\F$.

 We will say that the ring $R$ is \emph{$\F$\+h-local}
\cite[Section~IV.3]{FS}, \cite[Section~7]{BG2} if every ideal $I\in\F$
is contained only in finitely many maximal ideals of $R$ and every prime
ideal of $R$ belonging to $\F$ is contained in a unique maximal ideal.
 In other words, $R$ is $\F$\+h-local if and only if for every $I\in\F$
the quotient ring $R/I$ is semilocal and every prime ideal of $R/I$
is contained in a unique maximal ideal.
 Equivalently, this means that the ring $R/I$ is a finite product of
local rings (by Lemma~\ref{semilocal-nil-lemma}(a)).

 As usually, we denote by $\Max R$ the set of all maximal ideals of
the ring~$R$.
 Given a maximal ideal $\m\subset R$ and an $R$\+module $N$,
one can consider the local ring $R_\m$ and the $R_\m$\+module~$N_\m$,
that is, the localizations of $R$ and $N$ at~$\m$.
 Conversely, any $R_\m$\+module can be considered as an $R$\+module.
 Notice that if $N$ is $\F$\+torsion and $\m\notin\F$, then $N_\m=0$
(indeed, for any $x\in N$, the annihilator of~$x$ in $R$ is
not contained in~$\m$).

\begin{lem} \label{h-local-filter}
 Let $R$ be a commutative ring with a filter\/ $\F$ of ideals such
that the ring $R$ is\/ $\F$\+h-local.
 Given an\/ $\F$\+torsion $R$\+module $N$, consider all the maximal
ideals\/~$\m$ of the ring $R$ such that\/ $\m\in\F$.
 Then the natural map $N\rarrow\prod_\m N_\m$ whose components are
the localization maps $N\rarrow N_\m$ factorizes through the submodule\/
$\bigoplus_\m N_\m\subset\prod_\m N_\m$ and induces an isomorphism
$N\simeq\bigoplus_\m N_\m$.

 Conversely, if\/ $\F$ is a filter of ideals in $R$ such that for
every\/ $\F$\+torsion $R$\+module $N$ there is an isomorphism of
$R$\+modules $N\simeq\bigoplus_\m N_\m$, then the ring $R$ is\/
$\F$\+h-local.
\end{lem}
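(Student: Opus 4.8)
The plan is to prove both directions by reducing everything to the structural fact, guaranteed by $\F$\+h-locality via Lemma~\ref{semilocal-nil-lemma}(a), that $R/I$ is a finite product of local rings for every $I\in\F$.

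\emph{Forward direction.} First I would check that the natural map $N\rarrow\prod_\m N_\m$ lands in the direct sum. For any $x\in N$ the annihilator $I=\operatorname{Ann}_R(x)$ lies in $\F$, hence so does every maximal ideal containing it (a filter is upward closed), and by $\F$\+h-locality only finitely many maximal ideals contain $I$. For every other maximal ideal $\m$ some element of $I$ lies outside $\m$, so $x_\m=0$; thus the image of $x$ has finite support and the map factors through $\bigoplus_\m N_\m$. Injectivity is the standard observation that an element vanishing in every localization has annihilator contained in no maximal ideal, hence equal to $R$, so that $N\hookrightarrow\bigoplus_\m N_\m$. It remains to prove surjectivity, which is the crux.

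\emph{Surjectivity.} I would first treat a cyclic submodule $Rx\simeq R/I$ with $I\in\F$. By Lemma~\ref{semilocal-nil-lemma}(a), $R/I\simeq T_1\times\dots\times T_m$ with each $T_j$ local, so $R/I$ carries orthogonal idempotents $e_1,\dots,e_m$ with $\sum_j e_j=1$, the idempotent $e_j$ corresponding to the unique maximal ideal $\m_j\supseteq I$ of its factor. Writing $x=\sum_j e_jx$, one checks that $\operatorname{Ann}_R(e_jx)$ is contained only in $\m_j$, so that $R/\operatorname{Ann}_R(e_jx)$ is local; consequently every $s\notin\m_j$ has unit image there and hence acts invertibly on $e_jx$. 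Thus $R(e_jx)$ is already an $R_{\m_j}$\+module, $R(e_jx)=(Rx)_{\m_j}$, and the other localizations of $Rx$ vanish, so the natural map $Rx\rarrow\bigoplus_\m(Rx)_\m$ is an isomorphism. Since a finitely generated $\F$\+torsion module $Rx_1+\dots+Rx_n$ again has annihilator in $\F$, the same applies to it, and because localization, direct sums, and the formation of the natural map all commute with filtered colimits, passing to the colimit over the finitely generated submodules of $N$ assembles these into the desired isomorphism $N\simeq\bigoplus_\m N_\m$.

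\emph{Converse.} Here I would feed suitable test modules into the hypothesis. For $I\in\F$ the module $R/I$ is $\F$\+torsion, so $R/I\simeq\bigoplus_\m(R/I)_\m$; but a cyclic module, whose generator lies in a finite partial sum, cannot be isomorphic to a direct sum of infinitely many nonzero modules, and $(R/I)_\m\ne0$ precisely when $\m\supseteq I$, so only finitely many maximal ideals contain $I$. For a prime $\mathfrak p\in\F$ the domain $D=R/\mathfrak p$ is $\F$\+torsion, whence $D\simeq\bigoplus_{i=1}^k D_{\m_i}$ over the finitely many maximal ideals $\m_i\supseteq\mathfrak p$ (localizing at their images in $D$). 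Tensoring over $D$ with the fraction field sends the left-hand side to a one-dimensional vector space and the right-hand side to a $k$\+dimensional one, each localization $D_{\m_i}$ having rank one; as rank is an isomorphism invariant, $k=1$, so $\mathfrak p$ lies in a unique maximal ideal. These two facts are exactly $\F$\+h-locality. The delicate point of the whole argument is the surjectivity above: everything hinges on converting the abstract decomposition $R/I\simeq\prod_j T_j$ into honest idempotent ``localization'' elements inside $N$ and verifying that elements outside $\m_j$ act invertibly on them; once the cyclic case is secured, the colimit reduction is routine.
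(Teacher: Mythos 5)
Your proof is correct. Note that the paper does not actually prove this lemma: the stated proof is a bare citation of \cite[Proposition~7.4]{BG2}. So what you have written is a self-contained argument where the paper defers to a reference, and your route is the natural one. Finite support of the image follows from upward closure of $\F$ together with the finiteness half of $\F$\+h-locality; for a cyclic submodule $Rx\simeq R/I$ with $I\in\F$ you invoke the decomposition $R/I\simeq T_1\times\dotsb\times T_m$ into local rings, which is exactly what $\F$\+h-locality supplies via Lemma~\ref{semilocal-nil-lemma}(a) (every prime over $I$ lies in $\F$ by upward closure, hence under a unique maximal ideal), identify $R(e_jx)$ with the localization at $\m_j$ and check the other localizations vanish, and then pass to filtered colimits over finitely generated submodules, whose annihilators stay in $\F$ because $\F$ is closed under finite intersections. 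The converse via the test modules $R/I$ and $R/\mathfrak{p}$ is also sound; in place of the rank count over $\operatorname{Frac}(R/\mathfrak{p})$ you could observe more simply that a domain is indecomposable as a module over itself, since two nonzero ideals always intersect nontrivially. The only sentence worth tightening is the injectivity one: the product runs only over maximal ideals $\m\in\F$, so you should add (as the paper does in the paragraph preceding the lemma) that $N_\m=0$ automatically for $\m\notin\F$ when $N$ is $\F$\+torsion; in any case this point is subsumed by your colimit argument, which establishes bijectivity on each finitely generated piece directly.
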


\begin{proof}
 This is~\cite[Proposition~7.4]{BG2}.
\end{proof}

 It follows from Lemma~\ref{h-local-filter} that, for any $\F$\+h-local
ring $R$, the category of $\F$\+torsion $R$\+modules is equivalent to
the Cartesian product of the categories of $\F$\+torsion
$R_\m$\+modules, taken over the maximal ideals~$\m$ of the ring~$R$
belonging to the filter~$\F$.
 The direct sum functor
$$
 (N(\m))_{\m\in\F\cap\Max R}\,\longmapsto\,
 \bigoplus_{\m\in\F\cap\Max R} N(\m)
$$
establishes the equivalence, with an inverse equivalence provided by
the localization functor
$$
 N\,\longmapsto\,(N_\m)_{\m\in\F\cap\Max R}.
$$
 In particular, the decomposition of an $R$\+module $N$ into a direct
sum of $\F$\+torsion $R_\m$\+modules $N(\m)$ is unique and functorial
if it exists (and it exists if and only if $N$ is $\F$\+torsion).
 All these assertions can be found in the discussion
in~\cite[Section~7]{BG2}.
 Alternatively, they can be deduced directly from
Lemma~\ref{h-local-filter} using the dual version
of~\cite[Lemma~8.6]{BP2}.

 Let $R$ be a commutative ring with a filter $\F$ of ideals.
 We will say that $R$ is \emph{$\F$\+h-nil} \cite[Section~6]{BP},
\cite[Section~7]{BG2} if every ideal $I\in\F$ is contained only in
finitely many maximal ideals of $R$ and all the prime ideals of $R$
belonging to $\F$ are maximal.
 In other words, $R$ is $\F$\+h-nil if and only if for every $I\in\F$
the quotient ring $R/I$ is semilocal of Krull dimension zero.
 Equivalently, this means that $R/I$ is a finite product of local rings
whose maximal ideals consist of nilpotent elements
(by Lemma~\ref{semilocal-nil-lemma}(b)).
 Obviously, any $\F$\+h-nil ring is $\F$\+h-local.

 We refer to the book~\cite[Chapter~VI]{St} for the definition of
a \emph{Gabriel filter of ideals} (also known as a Gabriel topology)
in a ring~$R$.
 A filter $\F$ of ideals in a ring $R$ is said to have a \emph{base of
finitely generated ideals} if for any ideal $J\in\F$ there exists
a finitely generated ideal $I\subset R$ such that $I\in\F$
and $I\subset J$.

 In the rest of this section (and partly in the next one)
we will be working in the following setup.

\begin{setup} \label{nilsetup}
 Let $R$~be a commutative ring and $\G$ be a Gabriel filter of ideals
in $R$ with a base of finitely generated ideals.
 We assume that the ring $R$ is $\G$\+h-nil.
\end{setup}

 Given a commutative ring $R$ and an ideal $I\subset R$, we say that
an $R$\+module $N$ is \emph{$I$\+torsion} if for every $s\in I$ and
$x\in N$ there exists an integer $n\ge1$ such that $s^nx=0$ in~$N$.

\begin{lem} \label{local-G-torsion-m-torsion}
 Assume Setup~\ref{nilsetup}, and let\/ $\m$~be a maximal ideal of $R$
belonging to\/~$\G$.
 Then an $R$\+module $N$ is a\/ $\G$\+torsion $R_\m$\+module if and only
if $N$ is an\/ $\m$\+torsion $R$\+module.
\end{lem}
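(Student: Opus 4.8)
The plan is to treat both implications by localizing at a single element $x\in N$, passing to the cyclic submodule $Rx$, and using that under Setup~\ref{nilsetup} the relevant quotients of $R$ are finite products of local rings with nil maximal ideals (Lemma~\ref{semilocal-nil-lemma}(b)). For the implication that an $\m$\+torsion module $N$ is a $\G$\+torsion $R_\m$\+module, I would first use the base of finitely generated ideals to pick a finitely generated $I\in\G$ with $I\subseteq\m$. As $\G$ is a Gabriel filter it is closed under products of ideals, so $I^k\in\G$ for all $k\ge1$. Given $x\in N$, the finitely many generators of $I$ lie in $\m$ and each kills $x$ after a high enough power, so $I^k x=0$ for some $k$; thus $\operatorname{Ann}_R(x)\supseteq I^k\in\G$ and $N$ is $\G$\+torsion. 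To produce the $R_\m$\+structure I would view $Rx$ as a module over $R/I^k$, which, since $R$ is $\G$\+h-nil, is a finite product $\prod_j T_j$ of local rings with nil maximal ideals, the factor $T_j$ being $(R/I^k)_{\m_j}$ for the maximal ideals $\m_j\supseteq I$. Since $N$ is $\m$\+torsion, every element of $\m$ acts nilpotently on $x$, so $\operatorname{Supp}_R(Rx)\subseteq\{\m\}$; the idempotents of $R/I^k$ attached to the factors with $\m_j\ne\m$ therefore annihilate $Rx$, leaving $Rx$ a module over the single local ring $T_\m=(R/I^k)_\m$. As each $s\in R\setminus\m$ is a unit in $T_\m$, multiplication by $s$ is bijective on every $Rx$, hence on $N$, which is exactly the assertion that $N$ is an $R_\m$\+module.

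For the converse, suppose $N$ is a $\G$\+torsion $R_\m$\+module and fix a nonzero $x\in N$. Being $\G$\+torsion, $J=\operatorname{Ann}_R(x)$ lies in $\G$, and since $N$ is an $R_\m$\+module with $x\ne0$ we must have $J\subseteq\m$. As $R$ is $\G$\+h-nil, $R/J$ is a finite product of local rings with nil maximal ideals, and localizing at $\m$ collapses it onto the single local factor $(R/J)_\m$, whose maximal ideal is again nil. Because $N$ is an $R_\m$\+module, the action of $R$ on $x$ factors through $R_\m/JR_\m\cong(R/J)_\m$; hence every $a\in\m$ maps to a nilpotent element there and satisfies $a^n x=0$ for some $n$. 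Therefore $N$ is $\m$\+torsion.

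The main obstacle, common to both directions, is that an ideal of $\G$ lying inside $\m$ is only \emph{semilocal} modulo $R$: the quotient $R/I^k$ (respectively $R/J$) carries maximal ideals besides $\m$, so a general $s\notin\m$ need not act invertibly on the whole quotient. The device that resolves this is to restrict to a single cyclic submodule $Rx$ and show it is concentrated at $\m$---through the support computation coming from $\m$\+torsion in the first implication, and through the ambient $R_\m$\+structure in the second---so that only the local factor at $\m$, with its nil maximal ideal, survives. A secondary point needing care is the passage from the pointwise nil condition on the maximal ideals to an honest annihilation $I^k x=0$, which is precisely where the finiteness of the generating set of $I$, guaranteed by the base of finitely generated ideals, is used.
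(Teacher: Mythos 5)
Your proof is correct and follows essentially the same route as the paper: for one direction you use a finitely generated $I\in\G$ with $I\subseteq\m$ and the fact that $I^k\in\G$, and for the other you pass to the cyclic submodule over $(R/J)_\m$, which is local with nil maximal ideal by the $\G$\+h-nil hypothesis. The only difference is that you spell out, via the product decomposition of $R/I^k$ and a support argument, the step that an $\m$\+torsion module is an $R_\m$\+module, which the paper dismisses as clear (and which in fact holds for any maximal ideal $\m$, since $\sqrt{\operatorname{Ann}_R(x)}=\m$ for $0\ne x$).
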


\begin{proof}
 ``If'': clearly, any $\m$\+torsion $R$\+module $N$ is
an $R_\m$\+module.
 To show that $N$ is $\G$\+torsion, choose a finitely generated ideal
$I$ such that $I\subset\m$ and $I\in\G$.
 Then for every $x\in N$ there exists $n\ge1$ such that $I^nx=0$ in~$N$.
 Since $I^n\in\G$ \cite[Lemma~VI.5.3]{St}, it follows that
the annihilator of~$x$ in $R$ belongs to~$\G$.

 ``Only if'': let $N$ be a $\G$\+torsion $R_\m$\+module and $x\in N$
be an element.
 Choose an ideal $I\in\G$ annihilating~$x$.
 Then the cyclic submodule $R_\m x\subset N$ is a module over the ring
$R_\m/R_\m I =(R/I)_\m$.
 The ring $(R/I)_\m$ is local and, by Setup~\ref{nilsetup} and
Lemma~\ref{semilocal-nil-lemma}(b), its maximal ideal $R_\m\m/R_\m I$
consists of nilpotent elements.
 Hence every module over $(R/I)_\m$ is $\m$\+torsion. 
\end{proof}

\begin{cor} \label{G-torsion-m-torsion-coprod}
 Assuming Setup~\ref{nilsetup}, an $R$\+module $N$ is\/ $\G$\+torsion
if and only if it is isomorphic to a direct sum of some\/ $\m$\+torsion
$R$\+modules $N(\m)$ over the maximal ideals\/ $\m\in\G\cap\Max R$,
$$
 N\simeq\bigoplus_{\m\in\G\cap\Max R}N(\m).
$$
 Such a direct sum decomposition is unique and functorial when it
exists, and the $R$\+modules $N(\m)$ can be recovered as
the localizations $N(\m)=N_\m$.
\end{cor}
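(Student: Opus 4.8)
The plan is to deduce the statement by combining Lemma~\ref{h-local-filter} with Lemma~\ref{local-G-torsion-m-torsion}, using the observation made above that any $\G$\+h-nil ring is $\G$\+h-local. The point is that Lemma~\ref{h-local-filter} already produces the direct sum decomposition over the localizations $N_\m$, while Lemma~\ref{local-G-torsion-m-torsion} is exactly what is needed to reinterpret the summands $N_\m$ (a~priori $\G$\+torsion $R_\m$\+modules) as $\m$\+torsion $R$\+modules, and conversely.

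First I would prove the ``only if'' direction. Assume $N$ is $\G$\+torsion. Since $R$ is $\G$\+h-local, Lemma~\ref{h-local-filter} applied with $\F=\G$ yields the canonical isomorphism $N\simeq\bigoplus_\m N_\m$ over $\m\in\G\cap\Max R$. Each $N_\m$ is an $R_\m$\+module and is again $\G$\+torsion, because localization only enlarges annihilators and $\G$ is closed under passing to larger ideals. Hence by the ``only if'' part of Lemma~\ref{local-G-torsion-m-torsion}, each $N_\m$ is $\m$\+torsion, and setting $N(\m)=N_\m$ gives the required decomposition.

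For the ``if'' direction, suppose $N\simeq\bigoplus_\m N(\m)$ with each $N(\m)$ an $\m$\+torsion $R$\+module. By the ``if'' part of Lemma~\ref{local-G-torsion-m-torsion}, every such $N(\m)$ is $\G$\+torsion. The class of $\G$\+torsion modules is closed under arbitrary direct sums: an element of $\bigoplus_\m N(\m)$ has only finitely many nonzero components, so its annihilator contains the intersection of the finitely many component annihilators, and this finite intersection lies in the filter~$\G$. Thus $N$ is $\G$\+torsion.

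Finally, uniqueness, functoriality, and the formula $N(\m)=N_\m$ all follow from the category equivalence described after Lemma~\ref{h-local-filter}, once we translate $\m$\+torsion $R$\+modules into $\G$\+torsion $R_\m$\+modules via Lemma~\ref{local-G-torsion-m-torsion}: the decomposition $N\simeq\bigoplus_\m N(\m)$ is then literally a decomposition of $N$ into $\G$\+torsion $R_\m$\+modules, for which that equivalence guarantees uniqueness and functoriality. To recover $N(\m)$ I would localize the decomposition at~$\m$, noting that an $\m'$\+torsion module with $\m'\ne\m$ has vanishing localization at~$\m$ while $N(\m)$ is already local, whence $N_\m\simeq N(\m)$. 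I do not expect any genuine obstacle; the entire content is carried by the two preceding lemmas, and the only step needing a word of care is the closure of $\G$\+torsion modules under direct sums, which relies on $\G$ being closed under finite intersections.
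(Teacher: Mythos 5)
Your proposal is correct and follows exactly the paper's route: the paper's proof is the one\+line observation that $\G$\+h\+nil implies $\G$\+h\+local, so the claim follows by combining Lemma~\ref{h-local-filter} with Lemma~\ref{local-G-torsion-m-torsion}. You have merely spelled out the details (in particular the closure of $\G$\+torsion modules under direct sums and the identification of the summands with the localizations $N_\m$), and all of these details check out.
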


\begin{proof}
 The ring $R$ in Setup~\ref{nilsetup} is $\G$\+h-local (as $\G$\+h-nil
implies $\G$\+h-local).
 Hence the assertion follows from Lemmas~\ref{h-local-filter}
and~\ref{local-G-torsion-m-torsion}.
\end{proof}

\Section{$I$-Contramodule $R$-Modules}

 Let $R$ be a commutative ring.
 Given an element $t\in R$, an $R$\+module $C$ is said to be
a \emph{$t$\+contramodule} if $\Hom_R(R[t^{-1}],C)=0=
\Ext^1_R(R[t^{-1}],C)$.
 Given an ideal $I\subset R$, an $R$\+module $C$ is said to be
an \emph{$I$\+contramodule} if $C$ is a $t$\+contramodule for
every $t\in I$.
 The class of all $I$\+contramodule $R$\+modules is closed under
the kernels, cokernels, extensions, and infinite products in $R\Modl$
\cite[Lemma~5.1(1)]{BP}.
 The full subcategory of $I$\+contramodule $R$\+modules is denoted
by $R\Modl_{I\ctra}\subset R\Modl$.

\begin{lem} \label{maximal-ideal-contramodule}
 Let $R$ be a commutative ring and\/ $\m\subset R$ be a maximal ideal.
 Then any\/ $\m$\+contramodule $R$\+module is an $R_\m$\+module.
\end{lem}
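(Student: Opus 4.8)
The plan is to show that the $R$\+module structure on an $\m$\+contramodule $C$ extends to an $R_\m$\+module structure, which by the universal property of localization amounts to proving that multiplication by every element $s\in R\setminus\m$ is a bijective operator on~$C$. So I would fix such an~$s$ and construct its inverse. Since $\m$ is maximal, the quotient $R/\m$ is a field and the image of~$s$ is invertible in it; hence there is $r\in R$ with $rs-1\in\m$. Setting $t=1-rs\in\m$, we have $rs=1-t$, and it will suffice to prove that the operator $1-t$ acts invertibly on~$C$: then $s$ acts invertibly on $C$ with inverse $r\,(1-t)^{-1}$, since $(1-t)^{-1}$ is an $R$\+module endomorphism and hence commutes with multiplication by $r$ and~$s$. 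As $C$ is a $t$\+contramodule by the very definition of an $\m$\+contramodule, everything is reduced to the following key claim: if $C$ is a $t$\+contramodule, then $1-t\:C\rarrow C$ is bijective.

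To prove the claim I would use the telescope presentation of the localization $R[t^{-1}]=\varinjlim(R\xrightarrow{t}R\xrightarrow{t}\dotsb)$. This gives a free resolution $0\rarrow\bigoplus_{n\ge0}R\xrightarrow{\phi}\bigoplus_{n\ge0}R\rarrow R[t^{-1}]\rarrow0$, in which the surjection sends the $n$\+th basis vector $e_n$ to $t^{-n}$ and $\phi$ takes $e_n$ to $e_n-te_{n+1}$ (injectivity of $\phi$ is checked componentwise, starting from the lowest index). Applying $\Hom_R(-,C)$ and using the identification $\Hom_R(\bigoplus_{n\ge0}R,C)=\prod_{n\ge0}C$, this resolution computes $\Hom_R(R[t^{-1}],C)=\ker\phi^*$ and $\Ext^1_R(R[t^{-1}],C)=\coker\phi^*$, where $\phi^*\:\prod_{n\ge0}C\rarrow\prod_{n\ge0}C$ is the map $(c_n)_n\mapsto(c_n-tc_{n+1})_n$. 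Thus the condition that $C$ be a $t$\+contramodule is precisely the condition that $\phi^*$ be bijective.

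It then remains to read off the bijectivity of $1-t$ from that of~$\phi^*$. For injectivity: if $(1-t)c=0$, then the constant sequence $(c,c,c,\dots)$ satisfies $c_n=tc_{n+1}$ and hence lies in $\ker\phi^*=0$, so $c=0$. For surjectivity: given $d\in C$, the surjectivity of $\phi^*$ produces a sequence $(c_n)_n$ with $c_n-tc_{n+1}=d$ for all~$n$; the once-shifted sequence $(c_{n+1})_n$ solves the same system, so their difference lies in $\ker\phi^*=0$, forcing $(c_n)_n$ to be constant and yielding $(1-t)c_0=d$. This proves the claim and hence the lemma. The one genuinely substantive step is the identification of the $t$\+contramodule condition with the bijectivity of the telescope operator~$\phi^*$, which requires the explicit free resolution of $R[t^{-1}]$; once that is in place, extracting invertibility of $1-t$ is a short formal computation, and it is exactly this invertibility that converts the contramodule hypothesis into the desired localization statement.
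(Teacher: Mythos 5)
Your proof is correct. The paper itself gives no argument for this lemma, simply citing \cite[Lemma~5.1(2)]{BP}; your reduction of the invertibility of each $s\in R\setminus\m$ to the invertibility of $1-t$ for some $t\in\m$, together with the extraction of the latter from the telescope-resolution description of the $t$\+contramodule condition (bijectivity of $(c_n)_n\mapsto(c_n-tc_{n+1})_n$ on $\prod_{n\ge0}C$), is exactly the standard proof of that cited result.
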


\begin{proof}
 This is~\cite[Lemma~5.1(2)]{BP}.
\end{proof}

\begin{lem} \label{ext-from-torsion-module-decomp}
 Assuming Setup~\ref{nilsetup}, for any\/ $\G$\+torsion $R$\+module $N$,
any $R$\+module $B$, and any integer~$n\ge0$, the $R$\+module
$P=\Ext^n_R(N,B)$ can be presented as a product of some\/
$\m$\+contramodule $R$\+modules $P(\m)$ over the maximal ideals\/
$\m\in\G\cap\Max R$,
$$
 \Ext^n_R(N,B)\,\simeq\,\prod_{\m\in\G\cap\Max R} P(\m).
$$
\end{lem}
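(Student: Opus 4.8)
The plan is to reduce the statement to a computation for a single maximal ideal, and then to a single element $t\in\m$. First I would invoke Corollary~\ref{G-torsion-m-torsion-coprod} to write the $\G$\+torsion module $N$ as a direct sum $N\simeq\bigoplus_{\m\in\G\cap\Max R}N(\m)$, where $N(\m)=N_\m$ is an $\m$\+torsion $R$\+module. Since the contravariant functor $\Ext^n_R(-,B)$ carries direct sums in its first argument into products (take the direct sum of projective resolutions and use $\Hom_R(\bigoplus_i M_i,B)\simeq\prod_i\Hom_R(M_i,B)$ together with the commutation of cohomology with products), one obtains a natural isomorphism
$$
 \Ext^n_R(N,B)\;\simeq\;\prod_{\m\in\G\cap\Max R}\Ext^n_R(N(\m),B).
$$
Setting $P(\m)=\Ext^n_R(N(\m),B)$, it then remains to prove that each $P(\m)$ is an $\m$\+contramodule $R$\+module.

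By the definition of an $\m$\+contramodule, and since $N(\m)$ is $t$\+torsion for every $t\in\m$, the matter comes down to the following assertion: \emph{if $N'$ is a $t$\+torsion $R$\+module for an element $t\in R$, then $\Ext^n_R(N',B)$ is a $t$\+contramodule for every $n\ge0$}. To establish this I would pass to the derived category and use the derived tensor--Hom adjunction
$$
 \mathbf R\Hom_R\bigl(R[t^{-1}],\,\mathbf R\Hom_R(N',B)\bigr)\;\simeq\;
 \mathbf R\Hom_R\bigl(R[t^{-1}]\ot_R^{\mathbf L}N',\;B\bigr).
$$
The localization $R[t^{-1}]$ is a flat $R$\+module, so the derived tensor product on the right reduces to the ordinary one, $R[t^{-1}]\ot_R^{\mathbf L}N'\simeq N'[t^{-1}]$. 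As $N'$ is $t$\+torsion, we have $N'[t^{-1}]=0$, and therefore the whole complex $\mathbf R\Hom_R(R[t^{-1}],\mathbf R\Hom_R(N',B))$ is acyclic.

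It remains to convert this vanishing of the total derived Hom into the vanishing of the individual groups $\Hom_R(R[t^{-1}],\Ext^n_R(N',B))$ and $\Ext^1_R(R[t^{-1}],\Ext^n_R(N',B))$ that together constitute the $t$\+contramodule condition. Here I would invoke the hyperext spectral sequence
$$
 E_2^{p,q}=\Ext^p_R\bigl(R[t^{-1}],\,\Ext^q_R(N',B)\bigr)
 \;\Longrightarrow\;
 H^{p+q}\mathbf R\Hom_R\bigl(R[t^{-1}],\mathbf R\Hom_R(N',B)\bigr)=0.
$$
The crucial point is that $R[t^{-1}]$ has projective dimension at most~$1$ over $R$, since it admits the two-term ``telescope'' free resolution $0\rarrow\bigoplus_{i\ge0}R\rarrow\bigoplus_{i\ge0}R\rarrow R[t^{-1}]\rarrow0$ realizing $R[t^{-1}]$ as $\coker(1-t\sigma)$ for the shift map~$\sigma$. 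Hence the spectral sequence is concentrated in the two rows $p=0,1$ and degenerates at the $E_2$ page, so that for each~$n$ the (vanishing) abutment sits in a short exact sequence $0\rarrow E_2^{1,n-1}\rarrow H^n\rarrow E_2^{0,n}\rarrow0$. Vanishing of $H^n$ for all $n$ forces $E_2^{0,n}=0$ and $E_2^{1,n}=0$ for all~$n$, which is precisely $\Hom_R(R[t^{-1}],\Ext^n_R(N',B))=0=\Ext^1_R(R[t^{-1}],\Ext^n_R(N',B))$.

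The substantive part of the argument is thus the setup of the adjunction and the two-row spectral sequence; the main thing to be careful about is that the spectral sequence genuinely degenerates at $E_2$ and that the resulting filtration on the abutment yields honest short exact sequences, so that vanishing of $H^n$ separates into vanishing of each $E_2$ entry. Everything else—the decomposition of $N$, the passage from coproducts to products, and the identification $R[t^{-1}]\ot_R^{\mathbf L}N'\simeq N'[t^{-1}]=0$—is formal. Once the single-element assertion is in hand, applying it to every $t\in\m$ shows that $P(\m)$ is an $\m$\+contramodule, and assembling the product over $\m\in\G\cap\Max R$ completes the proof.
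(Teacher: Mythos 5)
Your proof is correct and follows the same route as the paper: decompose $N\simeq\bigoplus_{\m}N(\m)$ via Corollary~\ref{G-torsion-m-torsion-coprod}, use that $\Ext^n_R({-},B)$ turns coproducts into products, and then show each factor $\Ext^n_R(N(\m),B)$ is an $\m$\+contramodule. The only difference is that where the paper simply cites \cite[Lemma~6.5(1)]{BP} for the latter fact, you supply a complete and correct self-contained proof (derived tensor--Hom adjunction, vanishing of $N'[t^{-1}]$, and the degenerate two-column hyperext spectral sequence coming from $\pd_R R[t^{-1}]\le1$).
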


\begin{proof}
 Follows from Corollary~\ref{G-torsion-m-torsion-coprod}
and~\cite[Lemma~6.5(1)]{BP}.
\end{proof}

\begin{lem} \label{ext-from-invertible-to-contra}
 Let $R$ be a commutative ring, $t\in R$ be an element, $D$ be
an $R[t^{-1}]$\+module, and $Q$ be a $t$\+contramodule $R$\+module.
 Then\/ $\Ext^i_R(D,Q)=0$ for all integers\/ $i\ge0$.
\end{lem}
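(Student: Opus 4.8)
The plan is to reduce the vanishing for an arbitrary $R[t^{-1}]$\+module $D$ to the single module $R[t^{-1}]$, for which it is essentially built into the definition of a $t$\+contramodule, and then to propagate it to all $R[t^{-1}]$\+modules by d\'evissage. Write $A=R[t^{-1}]$ for brevity. The first thing I would check is that $\Ext^i_R(A,Q)=0$ for every $i\ge0$. The degrees $i=0$ and $i=1$ are exactly the hypothesis that $Q$ is a $t$\+contramodule, namely $\Hom_R(A,Q)=0=\Ext^1_R(A,Q)$. For $i\ge2$ it is enough to invoke $\pd_RA\le1$, a standard fact witnessed by the two-term free resolution
$$
 0\rarrow\bigoplus_{n\ge0}R\rarrow\bigoplus_{n\ge0}R\rarrow A\rarrow0
$$
arising from the telescope presentation $A=\varinjlim(R\xrightarrow{t}R\xrightarrow{t}\dotsb)$. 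Hence $\Ext^i_R(A,Q)=0$ for all $i\ge0$.

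Next I would pass from $A$ to arbitrary free $A$\+modules. Since the functor $\Ext^i_R(-,Q)$ carries direct sums in its first argument to products, for any set $X$ one obtains $\Ext^i_R(\bigoplus_{x\in X}A,\>Q)\simeq\prod_{x\in X}\Ext^i_R(A,Q)=0$ for all $i\ge0$. Thus every free $A$\+module is right $\Ext_R$\+orthogonal to $Q$ in all cohomological degrees.

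For the general case I would use dimension shifting. Present an arbitrary $A$\+module $D$ by choosing a surjection $F\rarrow D$ from a free $A$\+module $F$ with kernel $K$; then $K$ is an $A$\+submodule of $F$, hence again an $A$\+module. The short exact sequence $0\rarrow K\rarrow F\rarrow D\rarrow0$ is in particular an exact sequence of $R$\+modules, to which I apply $\Hom_R(-,Q)$. Since $\Ext^i_R(F,Q)=0$ for all $i$ by the previous step, the long exact sequence collapses into isomorphisms $\Ext^{i-1}_R(K,Q)\simeq\Ext^i_R(D,Q)$ for every $i\ge1$, together with the inclusion $\Hom_R(D,Q)\hookrightarrow\Hom_R(F,Q)=0$. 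The inclusion already yields $\Hom_R(D,Q)=0$ for \emph{every} $A$\+module $D$; feeding this into the isomorphisms and inducting on~$i$, where at each stage the syzygy $K$ is once more an $A$\+module so that the inductive hypothesis applies to it, gives $\Ext^i_R(D,Q)=0$ for all $i\ge0$ and all $A$\+modules~$D$. In particular this holds for the given~$D$.

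The argument is almost entirely formal once these inputs are assembled, so I do not anticipate a genuine obstacle; the two points that require care are the justification of $\pd_R R[t^{-1}]\le1$, which is what lets the contramodule hypothesis in degrees $0$ and $1$ cover all higher degrees as well, and the stability of the property of being an $A$\+module under passage to syzygies, which is precisely what makes the induction close. Everything else reduces to the commutation of $\Ext$ with coproducts and to the long exact sequence.
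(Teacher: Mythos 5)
Your proof is correct, but it takes a genuinely different route from the paper's. The paper argues in two lines: since $t$~acts invertibly in $D$, it acts invertibly in $E=\Ext^i_R(D,Q)$; on the other hand, $E$ is a $t$\+contramodule because $Q$ is one (this closure of the class of $t$\+contramodules under $\Ext^i_R({-},{-})$ in the second argument is quoted from~\cite[Lemma~6.5(2)]{BP}); and a $t$\+contramodule in which $t$~acts invertibly must vanish, as then $E\simeq\Hom_R(R[t^{-1}],E)=0$. Your d\'evissage instead uses only the definition of a $t$\+contramodule together with the telescope resolution witnessing $\pd_R R[t^{-1}]\le1$, propagating the vanishing from $R[t^{-1}]$ to free $R[t^{-1}]$\+modules (via the compatibility of $\Ext$ with coproducts in the first argument) and then to arbitrary ones by dimension shifting; the observation that syzygies of $R[t^{-1}]$\+modules are again $R[t^{-1}]$\+modules is exactly what closes your induction, and all the steps check out. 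What the paper's argument buys is brevity and the conceptual point that no nonzero $R$\+module can simultaneously be an $R[t^{-1}]$\+module and a $t$\+contramodule; what yours buys is self-containedness, since it avoids the external closure lemma and amounts to the Geigle--Lenzing perpendicular-category argument specialized to the flat epimorphism $R\rarrow R[t^{-1}]$. Both proofs are complete.
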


\begin{proof}
 The element~$t$ acts invertibly in the $R$\+module $E=\Ext^i_R(D,Q)$,
since it acts invertibly in~$D$.
 On the other hand, the $R$\+module $E$ is a $t$\+contramodule, since
the $R$\+module $Q$ is~\cite[Lemma~6.5(2)]{BP}.
 Thus $E\simeq\Hom_R(R[t^{-1}],E)=0$.
\end{proof}

 Given a filter $\F$ of ideals in a commutative ring $R$, we will say
that the ring $R$ is \emph{$\F$\+h-semilocal} if the ring $R/I$ is
semilocal for all $I\in\F$.
 This captures ``a half of'' the conditions from the definition of
an $\F$\+h-local ring in Section~\ref{h-locality-secn}.

\begin{prop} \label{ext-between-products}
 Let\/ $\F$ be a filter of ideals in a commutative ring $R$ such that\/
$\F$ has a base of finitely generated ideals and the ring $R$ is\/
$\F$\+h-semilocal.
 Let $P(\m)$ and $Q(\m)$ be two families of\/ $\m$\+contramodule
$R$\+modules, indexed over the maximal ideals\/ $\m\in\F\cap\Max R$.
 Then, for every $i\ge0$, the natural map
$$
 \prod_{\m\in\F\cap\Max R}\Ext^i_R\bigl(P(\m),Q(\m)\bigr)\lrarrow
 \Ext^i_R\left(\prod_{\m\in\F\cap\Max R}P(\m),\>
 \prod_{\m\in\F\cap\Max R}Q(\m)\right)
$$
is an isomorphism.
\end{prop}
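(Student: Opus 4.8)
The plan is to reduce the assertion to the vanishing Lemma~\ref{ext-from-invertible-to-contra}, exploiting two sources of finiteness provided by the hypotheses: the finitely many maximal ideals lying above any finitely generated ideal (from $\F$\+h-semilocality), and the finitely many generators of such an ideal. First I would dispose of the second variable. Since products are exact in $R\Modl$ and $\Ext^i_R(A,-)$ is computed from a projective resolution of~$A$, the functor $\Ext^i_R(A,-)$ carries products to products, so
$$
 \Ext^i_R\left(\prod_\m P(\m),\>\prod_\n Q(\n)\right)\simeq
 \prod_\n\Ext^i_R\left(\prod_\m P(\m),\>Q(\n)\right).
$$
It therefore suffices to prove, for each fixed $\n\in\F\cap\Max R$, that the projection $\prod_\m P(\m)\rarrow P(\n)$ induces an isomorphism $\Ext^i_R(\prod_\m P(\m),Q(\n))\simeq\Ext^i_R(P(\n),Q(\n))$.

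The crux is to break the first-variable product into \emph{finitely} many pieces. Using that $\n\in\F$ and that $\F$ has a base of finitely generated ideals, I would choose a finitely generated ideal $I=(t_1,\dots,t_k)\in\F$ with $I\subset\n$. Since $R$ is $\F$\+h-semilocal, the ring $R/I$ is semilocal, so only finitely many maximal ideals $\n=\n_1,\n_2,\dots,\n_r$ of $R$ contain~$I$. Every other $\m\in\F\cap\Max R$ fails to contain $I$, whence some generator $t_j\notin\m$; grouping such $\m$ by the least such index~$j$ partitions them into $k$ classes and yields a decomposition
$$
 \prod_\m P(\m)\;\simeq\;P(\n)\,\times\,\prod_{l=2}^r P(\n_l)\,\times\,\prod_{j=1}^k P_j'',
$$
a \emph{finite} product, where $P_j''$ is the product of those $P(\m)$ for which $t_j$ is the least generator lying outside~$\m$. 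Because $\Ext^i_R(-,Q(\n))$ turns finite products (that is, finite direct sums) in the first argument into finite direct sums, it is enough to show that every factor other than $P(\n)$ contributes zero.

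For the finitely many factors $P(\n_l)$ with $l\ge2$: by prime avoidance I can choose a single element $s\in\n$ with $s\notin\n_l$ for all $2\le l\le r$ (possible since $\n\not\subset\n_l$ for distinct maximal ideals). Then $s$ acts invertibly on each $R_{\n_l}$\+module $P(\n_l)$, and invertibility of $s$ passes to the product, so $\prod_{l\ge2}P(\n_l)$ is an $R[s^{-1}]$\+module; as $s\in\n$, the module $Q(\n)$ is an $s$\+contramodule, and Lemma~\ref{ext-from-invertible-to-contra} gives $\Ext^i_R(\prod_{l\ge2}P(\n_l),Q(\n))=0$. For each factor $P_j''$: every $P(\m)$ occurring in it is an $R_\m$\+module on which $t_j\notin\m$ acts invertibly, hence an $R[t_j^{-1}]$\+module, so $P_j''$ is an $R[t_j^{-1}]$\+module; since $t_j\in I\subset\n$, the module $Q(\n)$ is a $t_j$\+contramodule, and Lemma~\ref{ext-from-invertible-to-contra} again yields $\Ext^i_R(P_j'',Q(\n))=0$. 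Thus only the $P(\n)$\+summand survives, proving the fixed\+$\n$ isomorphism; assembling these identifications over all $\n$ recovers the natural map of the proposition, since each isomorphism is induced by the structural projections and inclusions.

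I expect the main obstacle to be precisely that $\Ext^i_R(-,-)$ does \emph{not} commute with infinite products in the first variable, so the infinite product $\prod_\m P(\m)$ cannot be split termwise. The entire role of the hypotheses is to convert it into a finite product---one genuine diagonal factor together with finitely many factors annihilated by Lemma~\ref{ext-from-invertible-to-contra}---after which the finite\+direct\+sum behaviour of Ext takes over. A small technical point to verify along the way is that invertibility of one element on each factor of a product passes to the product itself, which is exactly what allows the vanishing lemma to apply to the infinite factors $P_j''$ and $\prod_{l\ge2}P(\n_l)$.
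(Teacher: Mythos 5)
Your proof is correct and follows essentially the same route as the paper's: fix $\n$, pick a finitely generated $I\in\F$ with $I\subset\n$, use $\F$\+h-semilocality to isolate the finitely many maximal ideals over $I$, partition the remaining factors according to a generator of $I$ lying outside each, and kill every non-diagonal block with Lemma~\ref{ext-from-invertible-to-contra}. The only (harmless) variations are that you make the partition by least index and the reduction in the second variable explicit, and you use prime avoidance to handle the finitely many exceptional ideals at once where the paper treats them one at a time.
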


\begin{proof}
 Fix $\n\in\F\cap\Max R$, and denote by $P$ the product of $P(\m)$ over
all $\m\in\F\cap\Max R$, \ $\m\ne\n$.
 We have to show that $\Ext^i_R(P,Q(\n))=0$.
 
 Let $I$ be a finitely generated ideal in $R$ such that $I\subset\n$ and
$I\in\F$.
 Since $R$ is $\F$\+h-semilocal, the set of all maximal ideals~$\m$ of
the ring $R$ containing the ideal $I$ is finite.

 Let $s_1$,~\dots, $s_m$ be a finite set of generators of the ideal~$I$.
 For any ideal $\m\in\F\cap\Max R$ such that $\m$~does not contain $I$,
there exists an integer $1\le j\le m$ such that $\m$~does not
contain~$s_j$.
 Denote by $P_j$ the product of the $R$\+modules $P(\m)$ over all
the maximal ideals $\m\in\F\cap\Max R$ such that $s_j\notin\m$.

 By Lemma~\ref{maximal-ideal-contramodule}, \,$P(\m)$ is
an $R_\m$\+module.
 Hence $s_j$~acts invertibly in~$P_j$.
 On the other hand, $Q(\n)$ is an $s_j$\+contramodule.
 By Lemma~\ref{ext-from-invertible-to-contra}, it follows that
$\Ext^i_R(P_j,Q(\n))=0$.

 It remains to show that $\Ext^i_R(P(\m),Q(\n))=0$ for each one among
the finite set of maximal ideals $\m\in\F\cap\Max R$ such that
$I\subset\m$, \,$\m\ne\n$.
 Let $s\in R$ be an element such that $s\in\n$ and $s\notin\m$.
 Then $s$~acts invertibly in $P(\m)$, while $Q(\n)$ is
an $s$\+contramodule.
 Once again, it follows that $\Ext^i_R(P(\m),Q(\n))=0$.
\end{proof}

\begin{cor} \label{full-subcategory-of-products}
 For any filter\/ $\F$ of ideals in a commutative ring $R$ such that\/
$\F$ has a base of finitely generated ideals and the ring $R$ is\/
$\F$\+h-semilocal, the full subcategory of all $R$\+modules of
the form\/ $\prod_{\m\in\F\cap\Max R}C(\m)$, where $C(\m)$ are\/
$\m$\+contramodule $R$\+modules, is closed under the kernels, cokernels,
extensions, and infinite products in $R\Modl$.
\end{cor}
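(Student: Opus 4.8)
The plan is to reduce each of the four closure properties for the subcategory $\mathcal D$ of all products $\prod_{\m\in\F\cap\Max R}C(\m)$ with $C(\m)\in R\Modl_{\m\ctra}$ to the corresponding property of the single categories $R\Modl_{\m\ctra}$, which is supplied by \cite[Lemma~5.1(1)]{BP}. The device that lets me pass between a product $\prod_\m C(\m)$ and its individual factors is Proposition~\ref{ext-between-products}, applied in degrees $i=0$ and $i=1$. Closure under infinite products is immediate and uses no input from that proposition: given objects $X^{(\alpha)}=\prod_\m C^{(\alpha)}(\m)$ of $\mathcal D$, one rewrites $\prod_\alpha X^{(\alpha)}\simeq\prod_\m\bigl(\prod_\alpha C^{(\alpha)}(\m)\bigr)$, and each factor $\prod_\alpha C^{(\alpha)}(\m)$ is again an $\m$\+contramodule because $R\Modl_{\m\ctra}$ is closed under products.

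For kernels and cokernels, I would first apply Proposition~\ref{ext-between-products} with $i=0$ to a morphism $f\:\prod_\m P(\m)\to\prod_\m Q(\m)$ in $\mathcal D$. The isomorphism $\prod_\m\Hom_R(P(\m),Q(\m))\simeq\Hom_R(\prod_\m P(\m),\prod_\m Q(\m))$ identifies $f$ with the product morphism $\prod_\m f_\m$ of a uniquely determined family of component maps $f_\m\:P(\m)\to Q(\m)$. Since products are limits and therefore commute with kernels, $\ker f\simeq\prod_\m\ker f_\m$; and since direct products are exact in $R\Modl$, also $\coker f\simeq\prod_\m\coker f_\m$. Each $\ker f_\m$ and each $\coker f_\m$ lies in $R\Modl_{\m\ctra}$, so both $\ker f$ and $\coker f$ belong to $\mathcal D$.

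For extensions, I would invoke the case $i=1$. Given a short exact sequence $0\to\prod_\m P(\m)\to X\to\prod_\m Q(\m)\to 0$, its class lies in $\Ext^1_R(\prod_\m Q(\m),\prod_\m P(\m))$, which Proposition~\ref{ext-between-products} identifies with $\prod_\m\Ext^1_R(Q(\m),P(\m))$; call the corresponding family $(\xi_\m)_\m$. Each $\xi_\m$ represents an extension $0\to P(\m)\to X(\m)\to Q(\m)\to 0$ of $\m$\+contramodules, so $X(\m)\in R\Modl_{\m\ctra}$ by closure under extensions. Forming the product of these sequences (again exact, because products are exact in $R\Modl$) yields an extension of $\prod_\m Q(\m)$ by $\prod_\m P(\m)$ whose class is the image of $(\xi_\m)_\m$ under the natural map, hence coincides with the class of $X$. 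Therefore $X\simeq\prod_\m X(\m)\in\mathcal D$.

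The step I expect to be the main obstacle is the extension argument: one must check that the extension reassembled from the family $(\xi_\m)_\m$ is genuinely equivalent to the original sequence, i.e.\ that the natural map of Proposition~\ref{ext-between-products} in degree~$1$ indeed carries $(\xi_\m)_\m$ to the class of the product extension. This is the naturality bookkeeping linking the abstract $\Ext^1$ isomorphism with the Yoneda description of extensions. The single non-formal categorical input used throughout is the exactness of direct products in the module category $R\Modl$, which is precisely what makes the formation of cokernels and of product extensions commute with $\prod_\m$.
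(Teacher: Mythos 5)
Your proposal is correct and follows exactly the route the paper intends: the paper's proof is the one-line remark that the corollary ``follows from Proposition~\ref{ext-between-products} (for $i=0$ and~$1$)'', and your argument is precisely the standard unpacking of that remark, combining the $i=0$ and $i=1$ isomorphisms with the closure properties of each $R\Modl_{\m\ctra}$ and the exactness of products in $R\Modl$.
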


\begin{proof}
 Follows from Proposition~\ref{ext-between-products}
(for $i=0$ and~$1$).
\end{proof}

 Given a complex of $R$\+modules $M^\bu$, an $R$\+module $B$, and
an integer~$n$, we will denote by $\Ext^n_R(M^\bu,B)$ the derived
category Hom module $\Hom_{\sD(R\Modl)}(M^\bu,B[n])$.
 Here $\sD(R\Modl)$ is the derived category of $R$\+modules.

 The following proposition is the key technical assertion of this
paper, on which the proofs of the main results are based.

\begin{prop} \label{ext-from-complex-decomp}
 Assume Setup~\ref{nilsetup}.
 Let $M^\bu=(M^{-1}\to M^0)$ be a two-term complex of $R$\+modules
whose cohomology modules $H^{-1}(M^\bu)$ and $H^0(M^\bu)$ are\/
$\G$\+torsion.
 Then for any $R$\+module $B$ and any integer $n\ge0$,
the $R$\+module\/ $C=\Ext^n_R(M^\bu,B)$ can be presented as
the product of some\/ $\m$\+contramodule $R$\+modules $C(\m)$ over
the maximal ideals\/ $\m\in\G\cap\Max R$,
$$
 \Ext^n_R(M^\bu,B)\,\simeq\,\prod_{\m\in\G\cap\Max R} C(\m).
$$
\end{prop}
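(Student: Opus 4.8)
The plan is to deduce the statement from two facts already in place: that the $\Ext$ modules out of a $\G$\+torsion module decompose as products of $\m$\+contramodules (Lemma~\ref{ext-from-torsion-module-decomp}), and that the class of such products is closed under kernels, cokernels, and extensions (Corollary~\ref{full-subcategory-of-products}). The bridge between $M^\bu$ and its cohomology modules will be the canonical truncation triangle in the derived category. Let me first note that the hypotheses of Corollary~\ref{full-subcategory-of-products} hold for $\F=\G$: by Setup~\ref{nilsetup} the filter $\G$ has a base of finitely generated ideals, and since $R$ is $\G$\+h-nil it is in particular $\G$\+h-semilocal.

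First I would write down the canonical truncation triangle of the two-term complex $M^\bu=(M^{-1}\to M^0)$ in $\sD(R\Modl)$, namely
$$
 H^{-1}(M^\bu)[1]\rarrow M^\bu\rarrow H^0(M^\bu)\rarrow H^{-1}(M^\bu)[2].
$$
Applying the cohomological functor $\Ext^\bu_R(-,B)=\Hom_{\sD(R\Modl)}(-,B[\bu])$ and using the shift identity $\Hom_{\sD(R\Modl)}(H^{-1}(M^\bu)[1],B[k])\simeq\Ext^{k-1}_R(H^{-1}(M^\bu),B)$, one obtains a long exact sequence of $R$\+modules whose connecting homomorphisms have the form
$$
 \delta_k\:\Ext^{k}_R(H^{-1}(M^\bu),B)\rarrow\Ext^{k+2}_R(H^0(M^\bu),B).
$$
Extracting the kernel/image/cokernel pieces of this long exact sequence around degree~$n$, I would obtain a short exact sequence of $R$\+modules
$$
 0\rarrow\coker(\delta_{n-2})\rarrow\Ext^n_R(M^\bu,B)\rarrow\ker(\delta_{n-1})\rarrow 0.
$$

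Now both cohomology modules $H^{-1}(M^\bu)$ and $H^0(M^\bu)$ are $\G$\+torsion by hypothesis, so Lemma~\ref{ext-from-torsion-module-decomp} guarantees that every $\Ext$ module appearing as a source or target of a $\delta_k$, that is $\Ext^k_R(H^{-1}(M^\bu),B)$ and $\Ext^k_R(H^0(M^\bu),B)$, lies in the full subcategory of $R\Modl$ consisting of the products $\prod_{\m\in\G\cap\Max R}C(\m)$ of $\m$\+contramodule $R$\+modules (for $k<0$ the module vanishes, which causes no trouble). Since each $\delta_k$ is a morphism of $R$\+modules between objects of this subcategory, and the subcategory is closed under kernels and cokernels by Corollary~\ref{full-subcategory-of-products}, both outer terms $\coker(\delta_{n-2})$ and $\ker(\delta_{n-1})$ of the short exact sequence belong to it. Finally, the same corollary provides closure under extensions, whence the middle term $\Ext^n_R(M^\bu,B)$ belongs to the subcategory as well; that is, it is isomorphic to a product $\prod_{\m\in\G\cap\Max R}C(\m)$ of $\m$\+contramodule $R$\+modules, which is exactly the assertion.

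I do not expect a serious obstacle here, precisely because the conceptual content has been front-loaded into Lemma~\ref{ext-from-torsion-module-decomp} and Corollary~\ref{full-subcategory-of-products}. The only points that require care are the bookkeeping of the truncation triangle together with the resulting index shifts, and the observation that one genuinely needs all three closure properties simultaneously: closure under kernels and cokernels handles the two outer terms produced by the connecting maps, while closure under extensions is what finally transfers membership from the cohomology modules to $\Ext^n_R(M^\bu,B)$ itself.
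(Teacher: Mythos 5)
Your argument is correct and is essentially identical to the paper's own proof: the paper likewise invokes Lemma~\ref{ext-from-torsion-module-decomp} for the two cohomology modules and then concludes via the same long exact sequence (coming from the truncation triangle) together with the closure of the class of products of $\m$\+contramodules under kernels, cokernels, and extensions from Corollary~\ref{full-subcategory-of-products}. Your version merely spells out the extraction of the short exact sequence and the index bookkeeping, which the paper leaves implicit.
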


\begin{proof}
 By Lemma~\ref{ext-from-torsion-module-decomp}, the $R$\+modules
$\Ext^n_R(H^{-1}(M^\bu),B)$ and $\Ext^n_R(H^0(M^\bu),B)$ can be
decomposed as direct products of $\m$\+contramodule $R$\+modules
for all $n\ge0$.
 The existence of a similar direct product decompositions of
the $R$\+modules $\Ext^n_R(M^\bu,B)$ follows from the natural
long exact sequence of $R$\+modules
\begin{multline*}
 \dotsb\lrarrow\Ext^{n-2}(H^{-1}(M^\bu),B)
 \lrarrow\Ext^n_R(H^0(M^\bu),B) \\
 \lrarrow\Ext^n_R(M^\bu,B)\lrarrow \\
 \Ext_R^{n-1}(H^{-1}(M^\bu),B)\lrarrow
 \Ext^{n+1}(H^0(M^\bu),B)\lrarrow\dotsb 
\end{multline*} 
in view of Corollary~\ref{full-subcategory-of-products}
(cf.~\cite[proof of Lemma~6.11]{BP}
or~\cite[proof of Lemma~8.2]{Pcoun}).
\end{proof}

 An obvious generalization of Proposition~\ref{ext-from-complex-decomp}
holds for any bounded above complex of $R$\+modules $M^\bu$ with
$\G$\+torsion cohomology modules and any bounded below complex of
$R$\+modules $B^\bu$, as one can see from the related spectral sequence.
 We do not include the details, as we do not need them.

\Section{Projective Dimension~$1$ Theorem} \label{projdim1-secn}

 Let $u\:R\rarrow U$ be a flat epimorphism of commutative rings.
 This means that $u$~is a homomorphism of commutative rings such that
$U$ is a flat $R$\+module and the induced ring homomorphisms
$U\rightrightarrows U\ot_R U\rarrow U$ are isomorphisms.

 Denote by $\G$ the set of all ideals $I\subset R$ such that
$R/I\ot_RU=0$.
 Then $\G$ is a Gabriel filter of ideals in $R$, and the ring $U$
can be recovered as the ring of quotients $U=R_\G$ of the ring $R$
with respect to the Gabriel filter~$\G$.

 A Gabriel filter is said to be \emph{perfect} if it corresponds to
a flat epimorphism of rings in this way~\cite[Chapter~XI]{St}.
 Notice that any perfect Gabriel filter $\G$ has a base of finitely
generated ideals~\cite[Section~XI.3]{St}.

 The main results of this paper presume the following setup.

\begin{setup} \label{thesetup}
 Let $u\:R\rarrow U$ be a flat epimorphism of commutative rings,
and let $\G$ be the related perfect Gabriel filter of ideals in~$R$.
 We assume that, for every $I\in\G$, the quotient ring $R/I$ is
semilocal of Krull dimension zero (in other words, the ring $R$
is $\G$\+h-nil in our terminology).
\end{setup}

 Following~\cite[Section~13]{Pcta}, \cite[Section~1]{PMat},
\cite[Section~3]{PSl2}, \cite[Section~4]{BP},
\cite[Section~8]{Pcoun}, \cite[Section~2]{BP2}, etc.,
we will denote by $K_u^\bu$ the two-term complex of $R$\+modules
$R\rarrow U$, with the term $R$ placed in the cohomological degree~$-1$
and the term $U$ placed in the cohomological degree~$0$.
 The next corollary is the particular case of
Proposition~\ref{ext-from-complex-decomp} which we will use.

\begin{cor} \label{ext-from-K-decomp}
 Assuming Setup~\ref{thesetup}, for any $R$\+module $B$ and any integer
$n\ge0$, the $R$\+module\/ $C=\Ext^n_R(K_u^\bu,B)$ can be presented as
the product of some\/ $\m$\+contramodule $R$\+modules $C(\m)$ over
the maximal ideals\/ $\m\in\G\cap\Max R$,
$$
 \Ext^n_R(K_u^\bu,B)\,\simeq\,\prod_{\m\in\G\cap\Max R} C(\m).
$$
\end{cor}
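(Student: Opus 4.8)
The plan is to deduce this corollary as the special case of Proposition~\ref{ext-from-complex-decomp} obtained by taking $M^\bu=K_u^\bu=(R\to U)$. First I would observe that Setup~\ref{thesetup} entails Setup~\ref{nilsetup}, which is what Proposition~\ref{ext-from-complex-decomp} presumes: the Gabriel filter $\G$ is perfect, hence has a base of finitely generated ideals (as recalled just before Setup~\ref{thesetup}), and the ring $R$ is $\G$\+h-nil by the standing assumption. So the hypotheses on $\G$ and $R$ transfer verbatim, and the only substantive point is the remaining hypothesis of Proposition~\ref{ext-from-complex-decomp}, namely that the two cohomology modules of $K_u^\bu$ are $\G$\+torsion.

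These cohomology modules are $H^{-1}(K_u^\bu)=\ker(u)$ and $H^0(K_u^\bu)=U/u(R)$. I would show both are $\G$\+torsion using flatness of $U$ together with the epimorphism property of~$u$. The key computation is that the multiplication map $U\otimes_R U\to U$ is an isomorphism (this is the epimorphism condition), and that the composite of $U\otimes_R u$ with this isomorphism is the identity of~$U$; consequently the map $U\otimes_R u\colon U\otimes_R R\to U\otimes_R U$ is an isomorphism. Applying the exact functor $U\otimes_R{-}$ to the tautological sequences $0\to\ker(u)\to R\xrightarrow{u}U$ and $R\xrightarrow{u}U\to U/u(R)\to 0$ then gives $U\otimes_R\ker(u)=0$ and $U\otimes_R(U/u(R))=0$ at once.

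It remains to pass from the vanishing of $U\otimes_R{-}$ to the $\G$\+torsion property. Here I would invoke the standard fact from the theory of perfect Gabriel filters that, for the flat epimorphism $u\colon R\to U=R_\G$, an $R$\+module $N$ is $\G$\+torsion if and only if $U\otimes_R N=0$ (the module of quotients $N_\G$ coincides with $U\otimes_R N$, and the condition $N_\G=0$ characterizes $\G$\+torsion modules). With both cohomology modules of $K_u^\bu$ thereby shown to be $\G$\+torsion, Proposition~\ref{ext-from-complex-decomp} applies directly and yields the asserted product decomposition $\Ext^n_R(K_u^\bu,B)\simeq\prod_{\m\in\G\cap\Max R}C(\m)$ into $\m$\+contramodule $R$\+modules.

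The main obstacle, modest as it is, is this last step: the equivalence between $U\otimes_R N=0$ and $N$ being $\G$\+torsion is not established in the present excerpt and rests on the identification $U=R_\G$ and the general machinery of rings of quotients. I would therefore either cite Stenström's book for this characterization, or give the two short direct arguments instead: for $\ker(u)$, that it is precisely the $\G$\+torsion submodule $t_\G(R)$ by the construction of the localization; and for $U/u(R)$, that each of its elements is annihilated by a finitely generated ideal $I\in\G$, using $IU=U$ for $I\in\G$.
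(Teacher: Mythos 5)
Your proposal is correct and follows essentially the same route as the paper: the paper's proof likewise observes that $U\ot_RK_u^\bu$ is contractible (which is exactly your computation that $U\ot_R\ker(u)=0=U\ot_R(U/u(R))$), concludes that the cohomology modules of $K_u^\bu$ are $\G$\+torsion, and then applies Proposition~\ref{ext-from-complex-decomp}. You merely spell out the implication from $U\ot_RN=0$ to $N$ being $\G$\+torsion, which the paper leaves implicit as standard for perfect Gabriel filters.
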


\begin{proof}
 The complex of $U$\+modules $U\ot_RK_u^\bu$ is contractible, hence
the cohomology modules $H^{-1}(K_u^\bu)$ and $H^0(K_u^\bu)$ of
the complex $K_u^\bu$ are $\G$\+torsion $R$\+modules.
 Therefore, Proposition~\ref{ext-from-complex-decomp} is applicable.
\end{proof}

 The aim of this section is to prove the following theorem, which
is our most important result.

\begin{thm} \label{projdim1-theorem}
 Assuming Setup~\ref{thesetup}, the projective dimension of
the $R$\+module $U$ cannot exceed\/~$1$.
\end{thm}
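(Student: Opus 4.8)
The plan is to show that $\Ext^n_R(U,B)=0$ for all $n\ge 2$ and all $R$\+modules $B$, which gives $\pd_R U\le 1$ since $U$ is flat (so $\Tor$-dimension is $0$ and the relevant obstruction lives entirely in $\Ext$). Following the common approach outlined in the Introduction, I would confront the two different descriptions of the same Ext groups: on one hand, for $n\ge 2$ there is a natural isomorphism $\Ext^n_R(K_u^\bu,B)\simeq\Ext^n_R(U,B)$ (cited as \cite[Lemma~2.1(b)]{BP2}), and on the other hand, $\Ext^n_R(U,B)$ carries the structure of a $U$\+module (since $\Ext^\ast_R(U,-)$ lands in $U\Modl$ whenever $U$ is a ring epimorphic image of $R$). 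The goal is to prove that an $R$\+module which is simultaneously a product of $\m$\+contramodules and a $U$\+module must vanish.

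\smallskip

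\textbf{Key steps.}
First I would invoke Corollary~\ref{ext-from-K-decomp}: for every $n\ge 0$ the $R$\+module $C=\Ext^n_R(K_u^\bu,B)$ decomposes as $C\simeq\prod_{\m\in\G\cap\Max R}C(\m)$, where each $C(\m)$ is an $\m$\+contramodule. Next, for $n\ge 2$ I would identify $C$ with $\Ext^n_R(U,B)$ via \cite[Lemma~2.1(b)]{BP2}, so that $C$ is the underlying $R$\+module of a $U$\+module; in particular $C=U\ot_R C$ and the map $u\:R\to U$ acts on $C$ through $U$. The crux is then to show $\Hom_R(U,C)=0$ forces $C=0$, exploiting that $C$ is a $U$\+module: since $U$ is a $U$\+module map image, the identity $\mathrm{id}_C$ factors as $C\to\Hom_R(U,C)$, so it suffices to prove $\Hom_R(U,C)=0$. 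By Corollary~\ref{full-subcategory-of-products} and Proposition~\ref{ext-between-products} (for $i=0$), $\Hom_R(U,C)\simeq\prod_\m\Hom_R(U,C(\m))$ provided $U$ itself can be written as a product over the relevant maximal ideals — but $U$ is not $\G$\+torsion, so instead I would argue $\m$\+component by $\m$\+component directly: for each maximal ideal $\m\in\G$, one has $U/\m U=0$ (because $\m\in\G$ means $\m U=U$), and an $\m$\+contramodule $C(\m)$ admits no nonzero $R$\+module map from $U$, since $C(\m)$ is an $R_\m$\+module (Lemma~\ref{maximal-ideal-contramodule}) and $U\to U_\m$ becomes an isomorphism onto which $\m$ acts through a ring where, after localizing, $U_\m=R_\m{}_\G$ is the quotient killing $\m$. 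The cleanest formulation is that each $t\in\m\cap\G$ acts both invertibly on $U$ and topologically nilpotently on the $\m$\+contramodule, so $\Hom_R(U,C(\m))=\Hom_R(R[t^{-1}]\!-\!\text{type module},\text{$t$-contramodule})=0$ by the mechanism of Lemma~\ref{ext-from-invertible-to-contra}.

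\smallskip

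\textbf{The main obstacle.}
The delicate point is the compatibility between the $U$\+module structure on $\Ext^n_R(U,B)$ and the $\m$\+contramodule product decomposition coming from $K_u^\bu$: these two structures are defined on \emph{a priori} the same $R$\+module only through the isomorphism $\Ext^n_R(K_u^\bu,B)\simeq\Ext^n_R(U,B)$ valid for $n\ge 2$, and I must check this isomorphism is $R$\+linear (it is, being natural in $\sD(R\Modl)$) so that the contramodule decomposition transports to the $U$\+module. Once that is secured, the heart of the argument is showing that no nonzero $\m$\+contramodule (for $\m\in\G$) can be a $U$\+module, equivalently $\Hom_R(U,C(\m))=0$ for each $\m$: this uses that some finitely generated $I\in\G$ with $I\subset\m$ acts invertibly on $U$ (as $IU=U$ and $U$ is a flat epimorph, so $U=R_\G$ inverts $I$) while acting locally nilpotently on the $\m$\+torsion-adjacent contramodule, reducing to Lemma~\ref{ext-from-invertible-to-contra}. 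Assembling the vanishing over all $\m$ via the product decomposition then yields $\Ext^n_R(U,B)=0$ for $n\ge 2$, completing the proof.
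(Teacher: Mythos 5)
Your overall strategy is the paper's: apply Corollary~\ref{ext-from-K-decomp} to write $\Ext^n_R(K_u^\bu,B)\simeq\prod_{\m\in\G\cap\Max R}C(\m)$ with each $C(\m)$ an $\m$\+contramodule, identify this for $n\ge2$ with the $U$\+module $\Ext^n_R(U,B)$ via \cite[Lemma~2.1(b)]{BP2}, and derive a contradiction from the coexistence of the two structures. The reduction to $\Hom_R(U,C(\m))=0$ is also sound (a $U$\+module $C$ embeds into $\Hom_R(U,C)$ via $c\mapsto(x\mapsto xc)$). The problem is your justification of the vanishing $\Hom_R(U,C(\m))=0$, which is the heart of the matter. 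You claim that an element $t\in\m$, or a finitely generated ideal $I\in\G$ with $I\subset\m$, ``acts invertibly on $U$'' because $IU=U$, and then you invoke Lemma~\ref{ext-from-invertible-to-contra}. This is false: $IU=U$ means only that $I$ generates the unit ideal of $U$, not that individual elements of $I$ become units in $U$. If $I=(s_1,\dots,s_m)$ with $m\ge2$, it may well happen that no single $s_j$, and indeed no element of $\m$ at all, is invertible in $U$; so $U$ is not an $R[t^{-1}]$\+module for any $t\in\m$ and Lemma~\ref{ext-from-invertible-to-contra} does not apply. (The passage about $U_\m$ being ``the quotient killing $\m$'' does not repair this; in $R_\m$ one has $\m R_\m U_\m=U_\m$ with $U_\m$ not finitely generated, so ordinary Nakayama is unavailable.)

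What is genuinely needed here, and what the paper supplies, is a Nakayama-type statement for contramodules over a finitely generated ideal: Lemma~\ref{fin-gen-ideal-weak-nakayama} shows by iterating the one-element case that a nonzero $I$\+contramodule $C$ has $C/IC\ne0$. From this, Lemma~\ref{div-to-contra-epi-lemma} concludes that any $I$\+contramodule quotient of a $U$\+module vanishes (since $D/ID=0$ for a $U$\+module $D$ forces $C/IC=0$); the paper's proof of Theorem~\ref{projdim1-theorem} applies this directly to the direct summands $C(\m)$ of $D=\Ext^n_R(U,B)$. Your variant via $\Hom_R(U,C(\m))=0$ would instead need Proposition~\ref{strong-nakayama}, which rests on the same iterated weak Nakayama lemma together with the reflector $\Delta_I$. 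Without one of these ingredients your argument does not close; the ``invertible action'' mechanism only covers the degenerate case where $\G$ has a base of principal ideals (i.e.\ $U$ is a localization at a multiplicative set), and even then only elementwise.
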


 The argument is based on Corollary~\ref{ext-from-K-decomp}.
 We need several more lemmas before proceeding to prove the theorem.

\begin{lem} \label{one-element-weak-nakayama}
 Let $R$ be a commutative ring, $t\in R$ be an element, and $C$ be
a $t$\+contramodule $R$\+module.
 Then $C/tC\ne0$ whenever $C\ne0$.
\end{lem}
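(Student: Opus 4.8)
The plan is to translate the first half of the $t$-contramodule condition, namely $\Hom_R(R[t^{-1}],C)=0$, into a concrete statement about $t$-divisible sequences in $C$, and then to argue by contraposition. The key observation is that $R[t^{-1}]$ is the direct limit of the telescope $R\xrightarrow{t}R\xrightarrow{t}R\xrightarrow{t}\dotsb$, so applying $\Hom_R(-,C)$ turns this colimit into the inverse limit of the tower $\dotsb\xrightarrow{t}C\xrightarrow{t}C\xrightarrow{t}C$ in which every transition map is multiplication by~$t$:
$$
 \Hom_R(R[t^{-1}],C)\,\simeq\,\varprojlim\bigl(\dotsb\xrightarrow{t}C\xrightarrow{t}C\bigr).
$$
Concretely, an element of the right-hand side is a sequence $(c_n)_{n\ge0}$ of elements of $C$ with $tc_{n+1}=c_n$ for every $n\ge0$, where $c_n$ is the value of the corresponding homomorphism at $t^{-n}\in R[t^{-1}]$. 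Thus the hypothesis $\Hom_R(R[t^{-1}],C)=0$ says precisely that the only such sequence is the zero sequence.

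With this reformulation, I would prove the contrapositive: assuming $C/tC=0$, that is $tC=C$, I will deduce $C=0$. Given an arbitrary element $c_0\in C$, the equality $tC=C$ allows me to pick $c_1\in C$ with $tc_1=c_0$, then $c_2\in C$ with $tc_2=c_1$, and so on; recursively this yields a sequence $(c_n)_{n\ge0}$ satisfying $tc_{n+1}=c_n$ for all $n\ge0$. Such a sequence is an element of the inverse limit above, which vanishes by the contramodule hypothesis, so $(c_n)=0$ and in particular $c_0=0$. Since $c_0\in C$ was arbitrary, this gives $C=0$, as required.

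The only point needing care is the identification of $\Hom_R(R[t^{-1}],C)$ with the inverse limit of the tower of multiplications by~$t$. I would justify it either by applying $\Hom_R(-,C)$ to the standard telescope short exact sequence
$$
 0\rarrow\textstyle\bigoplus_{n\ge0}R\xrightarrow{\,e_n\mapsto e_n-te_{n+1}\,}\bigoplus_{n\ge0}R\rarrow R[t^{-1}]\rarrow0,
$$
or by invoking the commutation of $\Hom$ with direct limits in its first argument. Note that only the vanishing of $\Hom_R(R[t^{-1}],C)$ enters the argument; the $\Ext^1_R(R[t^{-1}],C)=0$ half of the $t$-contramodule condition is not needed for this lemma. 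I anticipate no genuine difficulty beyond this bookkeeping: the entire content is that $tC=C$ produces an infinitely $t$-divisible chain starting at any prescribed element, which the condition $\Hom_R(R[t^{-1}],C)=0$ forbids unless that element is zero.
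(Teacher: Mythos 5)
Your proof is correct and is essentially the same as the paper's: the paper simply observes that $tC=C$ makes the natural evaluation map $\Hom_R(R[t^{-1}],C)\rarrow C$ surjective, which is exactly your recursive construction of a $t$-divisible chain over any given $c_0\in C$, packaged via the identification of $\Hom_R(R[t^{-1}],C)$ with the inverse limit of the tower of multiplications by~$t$ on~$C$. Like yours, the paper's argument uses only the vanishing of $\Hom_R(R[t^{-1}],C)$ and not the $\Ext^1$ half of the contramodule condition.
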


\begin{proof}
 Assume that $C=tC$ for an $R$\+module~$C$.
 Then one easily observes that the natural $R$\+module map
$\Hom_R(R[t^{-1}],C)\rarrow C$ is surjective.
 If $C$ is a $t$\+contramodule, then $\Hom_R(R[t^{-1}],C)=0$,
and it follows that $C=0$.
\end{proof}

\begin{lem} \label{fin-gen-ideal-weak-nakayama}
 Let $R$ be a commutative ring and $I=(s_1,\dotsc,s_m)\subset R$ be
a finitely generated ideal.
 Let $C$ be an $I$\+contramodule $R$\+module.
 Then $C/IC\ne0$ whenever $C\ne0$.
\end{lem}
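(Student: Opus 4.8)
The plan is to induct on the number~$m$ of generators $s_1,\dotsc,s_m$ of the ideal~$I$, taking Lemma~\ref{one-element-weak-nakayama} as the base case $m=1$. For the inductive step I would set $J=(s_1,\dotsc,s_{m-1})$, so that $IC=JC+s_mC$, and reduce the statement for $I$ to the statement for~$J$ together with the one-element case for the remaining generator~$s_m$. First I would note that $C$ is automatically a $J$\+contramodule: since $J\subset I$ and $C$ is a $t$\+contramodule for every $t\in I$, it is in particular a $t$\+contramodule for every $t\in J$. The induction hypothesis, applied to the ideal $J$ and the $J$\+contramodule $C\ne0$, then yields $C/JC\ne0$. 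Write $\bar C=C/JC$.

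The step I expect to require the most care is verifying that $\bar C$ is again an $I$\+contramodule (in particular, an $s_m$\+contramodule), so that the one-element lemma can be applied to it. The key observation is that $JC=s_1C+\dotsb+s_{m-1}C$ is precisely the image of the $R$\+module homomorphism $C^{m-1}\rarrow C$ sending $(c_1,\dotsc,c_{m-1})$ to $\sum_{j=1}^{m-1}s_jc_j$; hence $\bar C=C/JC$ is the cokernel of this map. Since the class of $I$\+contramodule $R$\+modules is closed under finite products and cokernels in $R\Modl$ (by \cite[Lemma~5.1(1)]{BP}, as recalled above), the source $C^{m-1}$ is an $I$\+contramodule, and therefore so is its cokernel $\bar C$.

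It then remains to identify $C/IC$ with a one-element quotient of~$\bar C$ and invoke the base case. A routine computation gives $s_m\bar C=(s_mC+JC)/JC$, so that $\bar C/s_m\bar C\simeq C/(s_mC+JC)=C/IC$, using $IC=JC+s_mC$. As $\bar C$ is a nonzero $s_m$\+contramodule, Lemma~\ref{one-element-weak-nakayama} gives $\bar C/s_m\bar C\ne0$, i.e.\ $C/IC\ne0$, which completes the induction. The whole argument is thus a two-fold use of the single-element Nakayama-type Lemma~\ref{one-element-weak-nakayama}, glued together by the cokernel-closure of the subcategory $R\Modl_{I\ctra}$.
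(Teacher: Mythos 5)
Your proof is correct and follows essentially the same route as the paper's: both arguments amount to iterating Lemma~\ref{one-element-weak-nakayama} one generator at a time, using the closure of $I$\+contramodules under cokernels (\cite[Lemma~5.1(1)]{BP}) to keep the successive quotients in the class. The only cosmetic difference is that you organize the iteration as a formal induction on~$m$ and verify that $C/JC$ is an $I$\+contramodule via the cokernel of the single map $C^{m-1}\rarrow C$, whereas the paper forms the quotients $C/s_1C$, $C/(s_1C+s_2C)$,~\dots\ successively as cokernels of multiplication by one element at each step.
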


\begin{proof}
 Assume that $C\ne0$.
 Then, by Lemma~\ref{one-element-weak-nakayama}, we have $C/s_1C\ne0$.
 Furthermore, $C/s_1C$ is still an $I$\+contramodule, since it is
the cokernel of the $R$\+module morphism $s_1\:C\rarrow C$ between two
$I$\+contramodules.
 Applying Lemma~\ref{one-element-weak-nakayama} again, we see that
$C/(s_1C+s_2C)\ne0$, etc.
\end{proof}

\begin{lem} \label{div-to-contra-epi-lemma}
 Let $u\:R\rarrow U$ be a flat epimorphism of commutative rings, and
let\/ $\G$ be the related perfect Gabriel filter of ideals in~$R$.
 Suppose that $D$ is a $U$\+module and $C$ is an $I$\+contramodule
$R$\+module, where $I\in\G$.
 Suppose further that there is a surjective $R$\+module morphism
$D\rarrow C$.
 Then $C=0$.
\end{lem}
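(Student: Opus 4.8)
The plan is to combine two facts: being a $U$\+module forces $D$, and hence any $R$\+module quotient of it, to be divisible by every ideal of~$\G$; whereas an $I$\+contramodule cannot be divisible by a finitely generated ideal contained in~$I$ unless it vanishes. The latter is exactly the contramodule Nakayama-type statement recorded in Lemma~\ref{fin-gen-ideal-weak-nakayama}. So the crux is to reduce to a finitely generated ideal and to establish the divisibility $I'C=C$ for it, after which the cited lemma finishes the argument.

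First I would pass from $I$ to a finitely generated ideal $I'\subset I$ with $I'\in\G$; such an $I'$ exists because the perfect Gabriel filter $\G$ has a base of finitely generated ideals. Since $I'\subset I$ and $C$ is a $t$\+contramodule for every $t\in I$, the module $C$ is in particular an $I'$\+contramodule. Next I would check that any $U$\+module $D$ satisfies $I'D=D$: the relation $I'\in\G$ means $I'U=U$, so $1$ can be written as $1=\sum_k u(s_k)v_k$ with $s_k\in I'$ and $v_k\in U$, and then $d=\sum_k s_k(v_kd)$ exhibits an arbitrary $d\in D$ as an element of $I'D$. Applying the given surjective $R$\+module map $D\rarrow C$, which is $R$\+linear, the submodule $I'D=D$ maps onto $I'C$, while $D$ maps onto~$C$; therefore $I'C=C$, that is, $C/I'C=0$.

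Finally, because $I'$ is finitely generated and $C$ is an $I'$\+contramodule, Lemma~\ref{fin-gen-ideal-weak-nakayama} asserts that $C/I'C\ne0$ whenever $C\ne0$. As we have just shown $C/I'C=0$, it follows that $C=0$, as desired. I expect no serious obstacle here, since the weak Nakayama machinery is already available; the one point requiring care is the reduction to a finitely generated $I'\in\G$, which is precisely what licenses the use of Lemma~\ref{fin-gen-ideal-weak-nakayama}, while the conceptual heart is the observation that the $U$\+module structure forces the divisibility $I'D=D$ that is incompatible with $C$ being a nonzero $I'$\+contramodule.
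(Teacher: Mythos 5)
Your proposal is correct and follows essentially the same route as the paper: reduce to a finitely generated ideal in $\G$ (using the base of finitely generated ideals), observe that the $U$\+module structure forces $I'D=D$ hence $C/I'C=0$ via the surjection, and conclude by Lemma~\ref{fin-gen-ideal-weak-nakayama}. The only cosmetic difference is that you verify $I'D=D$ by an explicit partition-of-unity computation where the paper simply cites $R/I\ot_RU=0$.
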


\begin{proof}
 Without loss of generality we can assume the ideal $I\in\G$ to be
finitely generated (because the filter $\G$ has a base of finitely
generated ideals).
 We have $D/ID=0$, since $R/I\ot_RU=0$.
 As the morphism $D/ID\rarrow C/IC$ is surjective, it follows that
$C/IC=0$.
 By Lemma~\ref{fin-gen-ideal-weak-nakayama}, we can conclude that
$C=0$.
\end{proof}

\begin{proof}[Proof of Theorem~\ref{projdim1-theorem}]
 Let $B$ be an $R$\+module.
 Then, for every integer $n\ge2$, there is a natural isomorphism of
$R$\+modules $\Ext^n_R(K_u^\bu,B)\simeq\Ext^n_R(U,B)$
(see~\cite[Lemma~2.1(b)]{BP2}; cf.~\cite[Lemma~4.8(3)]{BP}).
 By Corollary~\ref{ext-from-K-decomp}, we have $\Ext^n_R(K_u^\bu,B)
\simeq\prod_{\m\in\G\cap\Max R}C(\m)$, where $C(\m)$ are some
$\m$\+contramodule $R$\+modules.
 On the other hand, $D=\Ext^n_R(U,B)$ is a $U$\+module.
 For every~$\m$, the $R$\+module $C(\m)$ is a quotient (in fact,
a direct summand) of the $R$\+module~$D$.
 By Lemma~\ref{div-to-contra-epi-lemma}, it follows that $C(\m)=0$,
hence $\Ext^n_R(U,B)\simeq\Ext^n_R(K_u^\bu,B)=0$.
\end{proof}

\Section{Divisible Modules} \label{divisible-secn}

 The following definitions, generalizing the classical notions of
$S$\+divisible and $S$\+h-divisible modules for a multiplicative
subset $S\subset R$ and the related ring epimorphism
$R\rarrow S^{-1}R$ \,\cite[Section~1]{PMat}, are quite natural.

\begin{dfn}
 Given a filter $\F$ of ideals in a commutative ring $R$
and an $R$\+module $D$, we say that $D$ is
\emph{$\F$\+divisible}~\cite[Section~VI.9]{St}
if $R/I\ot_RD=0$ for all $I\in\F$.
\end{dfn}

\begin{dfn}
 Let $u\:R\rarrow U$ be an epimorphism of commutative rings.
 We say that an $R$\+module $D$ is \emph{$u$\+divisible}
(or \emph{$u$\+h-divisible}) \cite[Remark~1.2(1)]{BP2} if $D$ is
a quotient $R$\+module of a $U$\+module.
\end{dfn}

 For a flat epimorphism~$u$ and the related Gabriel filter $\G$,
one can immediately see that any $u$\+divisible $R$\+module is
$\G$\+divisible.
 The converse is not true in general, even when $U=S^{-1}R$
\,\cite[Proposition~6.4]{AHT}, \cite[Lemma~1.8(b)]{PMat}.

\begin{thm} \label{divisible-theorem}
 Let $u\:R\rarrow U$ be a flat epimorphism of commutative rings, and
let\/ $\G$ be the related perfect Gabriel filter of ideals in~$R$.
 Assume that the projective dimension of the $R$\+module $U$ does not
exceed~$1$.
 Then the classes of $u$\+divisible and\/ $\G$\+divisible $R$\+modules
coincide.
\end{thm}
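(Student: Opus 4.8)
The reverse inclusion -- that every $u$\+divisible module is $\G$\+divisible -- is the one already observed, so the plan is to prove that every $\G$\+divisible module $D$ is $u$\+divisible, and to reduce this to a single vanishing statement. Applying $\Hom_{\sD(R\Modl)}(-,D)$ to the distinguished triangle $R\rarrow U\rarrow K_u^\bu\rarrow R[1]$ and passing to the long exact sequence of $\Ext$\+modules, one extracts the fragment
\[
 \Hom_R(U,D)\lrarrow D\lrarrow\Ext^1_R(K_u^\bu,D)\lrarrow\Ext^1_R(U,D)\lrarrow 0,
\]
in which the first arrow is the evaluation map $f\mapsto f(1)$. Its image is the sum of the images of all homomorphisms $U\rarrow D$, hence the largest $u$\+divisible submodule of $D$; therefore $D$ is $u$\+divisible if and only if this evaluation map is surjective, and this certainly holds once $\Ext^1_R(K_u^\bu,D)=0$. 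Thus the whole theorem will follow as soon as I show that $\Ext^1_R(K_u^\bu,D)=0$ for every $\G$\+divisible module~$D$.

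Next I would bring in the hypothesis $\pd_RU\le1$. Since $R$ is projective, this hypothesis means that $K_u^\bu$ is isomorphic in $\sD(R\Modl)$ to a complex $(Q^{-1}\rarrow Q^0)$ of projective $R$\+modules placed in cohomological degrees $-1$ and~$0$; in particular $\Ext^n_R(K_u^\bu,-)=0$ for all $n\ge2$. Its cohomology modules $\ker u=H^{-1}(K_u^\bu)$ and $\coker u=H^0(K_u^\bu)$ are $\G$\+torsion, because $U\ot_RK_u^\bu$ is a contractible complex. Feeding this into the long exact sequence relating the modules $\Ext^n_R(K_u^\bu,D)$ to $\Ext^n_R(\coker u,D)$ and $\Ext^n_R(\ker u,D)$ -- the same sequence used in the proof of Proposition~\ref{ext-from-complex-decomp} -- reduces the required vanishing to $\Ext^1_R(\coker u,D)=0$, together with an easier statement accounting for the $\G$\+torsion module~$\ker u$.

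The heart of the matter, and the step where $\pd_RU\le1$ is genuinely indispensable, is therefore the assertion that $\Ext^1_R(\coker u,D)=0$ for the $\G$\+torsion module $\coker u$ and the $\G$\+divisible module~$D$. (That this cannot hold without the projective dimension hypothesis is clear, since the coincidence of the two divisibility classes is precisely what one is characterizing.) Here the plan is to exploit that $\pd_RU\le1$ forces $\coker u$ to be a direct sum of countably presented $R$\+modules -- the flat-epimorphism counterpart of the criterion for $U=S^{-1}R$ recalled in the introduction -- which reduces the problem to a single countably presented $\G$\+torsion module $W$ of projective dimension~$\le1$. Writing $W$ as a countable direct limit $\varinjlim_n V_n$ of finitely presented modules, I would compute $\Ext^1_R(W,D)$ as the derived inverse limit $\varprojlim^1\Hom_R(V_n,D)$; the transition maps of this tower are surjective precisely because $D$ is $\G$\+divisible (the relevant elements of the ideals in $\G$ act surjectively on $D$), and a tower of surjections has vanishing $\varprojlim^1$.

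The hardest part is exactly this last point: upgrading the tensor-theoretic condition $R/I\ot_RD=0$ for $I\in\G$ that defines $\G$\+divisibility into $\Ext^1$\+vanishing against a module of projective dimension one. This is where the countable-presentation structure of $\coker u$ supplied by $\pd_RU\le1$ must be combined with the surjectivity extracted from $\G$\+divisibility, and it is the step where the general flat epimorphism case demands more care than the localization case $U=S^{-1}R$ of~\cite{AHT}.
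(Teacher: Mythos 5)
Your reduction of the theorem to the vanishing of $\Ext^1_R(K_u^\bu,D)$ for every $\G$\+divisible module $D$ is correct, but the core of your argument rests on two claims that are not justified at the level of generality required. First, you invoke ``the flat-epimorphism counterpart'' of the criterion from~\cite{AHT} to assert that $\pd_RU\le1$ forces $\coker u$ to be a direct sum of countably presented $R$\+modules. In the localization case (with $S$ consisting of regular elements) this is a deep theorem resting on the countable-type theory of $1$\+tilting classes; for a general, possibly non-injective, flat epimorphism the module $U\oplus U/R$ is only silting rather than tilting, and no such structure result is available as a black box --- indeed the paper's remark following this theorem shows that the non-injective case behaves genuinely differently in closely related respects. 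Second, even granting that reduction and writing a countably presented piece as $W=\varinjlim_n V_n$ with $V_n$ finitely presented, the Milnor exact sequence exhibits $\Ext^1_R(W,D)$ as an extension of $\varprojlim_n\Ext^1_R(V_n,D)$ by $\varprojlim^1\Hom_R(V_n,D)$; you address only the $\varprojlim^1$ term. The vanishing of $\Ext^1_R(V_n,D)$ for finitely presented $\G$\+torsion modules against a merely $\G$\+divisible $D$ is exactly the passage from the tensor condition $R/I\ot_RD=0$ to a Hom-type divisibility condition, which is in substance what the theorem asserts; at this point the argument becomes circular. (A smaller issue: your long exact sequence reduction to $\coker u$ also requires the injectivity of the connecting map $\Hom_R(\ker u,D)\rarrow\Ext^2_R(\coker u,D)$, and $\Hom_R(\ker u,D)$ need not vanish for a $\G$\+torsion kernel and a $\G$\+divisible~$D$.)

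The paper's proof takes an entirely different and much shorter route, essentially consisting of two citations: by~\cite[Example~6.5]{MS}, the hypothesis $\pd_RU\le1$ (together with flatness of~$u$) makes $U\oplus U/R$ a silting $R$\+module whose silting class is precisely the class of $u$\+divisible modules, and by~\cite[Theorem~4.7]{AH} every silting class over a commutative ring is the class of $\G'$\+divisible modules for some Gabriel filter $\G'$ with a base of finitely generated ideals, which is then identified with the filter $\G$ associated to~$u$. A hands-on proof along the lines you propose would in effect have to reprove the relevant portion of this silting-theoretic machinery.
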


\begin{proof}
 The following argument based on the results of the paper~\cite{AH}
was communicated to the author by S.~Bazzoni.
 Denote the $R$\+module $\coker(u)=U/u(R)$ simply by~$U/R$.
 Then, by~\cite[Example~6.5]{MS}, the direct sum $U\oplus U/R$
is a silting $R$\+module.
 The related silting class is the class of all quotient modules of
direct sums of copies of $U\oplus U/R$, which means the class of all
$u$\+divisible $R$\+modules.
 By~\cite[Theorem~4.7]{AH}, for any silting class over a commutative
ring $R$ there exists a Gabriel filter $\G$ (with a base of finitely
generated ideals) in $R$ such that the silting class consists of
all the $\G$\+divisible $R$\+modules.
 So the class of all $u$\+divisible $R$\+modules coincides with
the class of all $\G$\+divisible $R$\+modules for some Gabriel
filter~$\G$.
 It follows immediately that $\G$ is the perfect Gabriel filter
related to~$u$, as desired.
\end{proof}

\begin{rem}
 For \emph{injective} flat epimorphisms of commutative rings
$u\:R\rarrow U$ (i.~e., when the map~$u$ is injective), the following
converse assertion to Theorem~\ref{divisible-theorem} holds.
 If all $\G$\+divisible $R$\+modules are $u$\+divisible, then
the projective dimension of the $R$\+module $U$ does not exceed~$1$.
 This is a part of~\cite[Theorem~5.4]{Hrb}.

 However, there do exist noninjective flat commutative ring
epimorphisms~$u$ of projective dimension more than~$1$ for which
the class of $u$\+divisible modules coincides with that of
$\G$\+divisible ones.
 In fact, such examples exist already among the maps of localization
by multiplicative subsets $u\:R\rarrow S^{-1}R=U$.
 Consequently, \emph{neither}~\cite[Proposition~6.4]{AHT}
\emph{nor}~\cite[Lemma~1.8(b)]{PMat} hold true for multiplicative
subsets $S\subset R$ containing zero-divisors.
 The following transparent construction of counterexamples was
communicated to the author by M.~Hrbek.

 Let $R$ be a von~Neumann regular commutative ring and $I\subset R$
be an ideal.
 Then $I$ is generated by some set of idempotent elements $e_j\in R$.
 Let $S\subset R$ be the multiplicative subset generated by
the complementary idempotents $f_j=1-e_j$.
 Then $S^{-1}R\simeq R/I$.
 Furthermore, an $R$\+module $M$ is annihilated by~$e_j$ for a given
index~$j$ if and only if $f_j$~acts invertibly in $M$, and if and only
if $f_j$~acts by a surjective endomorphism of~$M$.
 Hence an $R$\+module $M$ is annihilated by the whole ideal $I$ if
and only if all the elements of $S$ act invertibly in $M$, and if and
only if all the elements of $S$ act in $M$ by surjective maps.

 Consequently, all $S$\+divisible $R$\+modules are $S$\+h-divisible
(moreover, all of them are $S^{-1}R$\+modules).
 In other words, if $u\:R\rarrow S^{-1}R=R/I=U$ is the natural
surjective flat epimorphism of rings and $\G$ is the related
perfect Gabriel filter in $R$ (i.~e., the filter of all ideals
intersecting~$S$), then the classes of $u$\+divisible and
$\G$\+divisible $R$\+modules coincide.
 (Cf.\ the discussion of silting and cosilting classes over
von~Neumann regular commutative rings
in~\cite[Section~1.5.3]{Hrb-thesis}.)

 On the other hand, it is well-known that there exist von~Neumann
regular commutative rings of arbitrary homological
dimension (see, e.~g.,~\cite{Pier}).
 So one can choose $R$ and $I$ so as to make the projective dimension
of the $R$\+module $R/I$ to be equal to any chosen nonnegative integer
or infinity.
\end{rem}

\begin{cor} \label{divisible-cor}
 Assuming Setup~\ref{thesetup}, an $R$\+module is $u$\+divisible if
and only if it is\/ $\G$\+divisible.
\end{cor}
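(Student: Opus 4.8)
The plan is to read this corollary off as an immediate consequence of the two main results already established earlier under Setup~\ref{thesetup}. First I would observe that Setup~\ref{thesetup} is precisely the hypothesis of Theorem~\ref{divisible-theorem}---namely, $u\:R\rarrow U$ is a flat epimorphism of commutative rings and $\G$ is the related perfect Gabriel filter of ideals in $R$---augmented by the single extra assumption that $R$ is $\G$\+h-nil, i.e., that $R/I$ is semilocal of Krull dimension zero for every $I\in\G$. Thus the only hypothesis of Theorem~\ref{divisible-theorem} that is not literally part of Setup~\ref{thesetup} is the bound $\pd_R U\le1$.

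Next I would invoke Theorem~\ref{projdim1-theorem}, whose hypotheses are exactly those of Setup~\ref{thesetup}, to conclude that $\pd_R U\le1$. This supplies the one remaining hypothesis of Theorem~\ref{divisible-theorem}. With $\pd_R U\le1$ now in hand and the flat epimorphism and perfect Gabriel filter already given by Setup~\ref{thesetup}, Theorem~\ref{divisible-theorem} applies verbatim and yields that the classes of $u$\+divisible and $\G$\+divisible $R$\+modules coincide, which is exactly the assertion to be proved.

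Since the entire content of the corollary is the juxtaposition of two previously proved theorems, there is no genuine obstacle at this stage: all the substantive work has already been carried out, namely the projective dimension bound obtained via the $\m$\+contramodule product decomposition of Corollary~\ref{ext-from-K-decomp} together with the weak Nakayama argument of Lemma~\ref{div-to-contra-epi-lemma}, and the silting-theoretic identification underlying Theorem~\ref{divisible-theorem}. The sole point to verify here is that the hypotheses of the two theorems match up under Setup~\ref{thesetup}, which they do by construction, so the proof reduces to citing Theorems~\ref{projdim1-theorem} and~\ref{divisible-theorem} in sequence.
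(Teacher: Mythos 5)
Your proposal is correct and coincides with the paper's own first proof, which likewise derives the corollary by citing Theorem~\ref{projdim1-theorem} to obtain $\pd_RU\le1$ and then applying Theorem~\ref{divisible-theorem}. (The paper additionally offers a second, contramodule-based proof via Proposition~\ref{strong-nakayama} and the exact sequence~($*$), but that is presented only as an illustrative alternative.)
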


\begin{proof}[First proof]
 Follows immediately from Theorems~\ref{projdim1-theorem}
and~\ref{divisible-theorem}.
\end{proof}

 In order to illustrate the workings of contramodule techniques,
in the rest of this section we present an alternative proof of
Corollary~\ref{divisible-cor}.
 For this purpose, we need the following strengthening of
the lemmas from Section~\ref{projdim1-secn}.

\begin{prop} \label{strong-nakayama}
 Let $R$ be a commutative ring and $I=(s_1,\dotsc,s_m)\subset R$ be
a finitely generated ideal.
 Let $B$ be an $R$\+module such that $B/IB=0$ and $C$ be
an $I$\+contramodule $R$\+module.
 Then $\Hom_R(B,C)=0$.
\end{prop}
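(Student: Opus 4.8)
The plan is to turn the statement about $\Hom$ into a statement about submodules of $C$, and then to induct on the number~$m$ of generators of~$I$. Given any $\phi\in\Hom_R(B,C)$, put $M=\phi(B)\subseteq C$; since $M$ is a quotient of $B$ and $B/IB=0$, we get $M/IM=0$, i.e.\ $IM=M$. So it is enough to prove the following: \emph{every $R$\+submodule $M\subseteq C$ of an $I$\+contramodule with $IM=M$ is zero} (apply this to $M=\phi(B)$; conversely it is the special case $B=M$, $\phi$ the inclusion). For the base case $m=1$, where $I=(t)$ and $tM=M$, I would argue as in Lemma~\ref{one-element-weak-nakayama}: the inclusion $M\hookrightarrow C$ gives an injection $\Hom_R(R[t^{-1}],M)\hookrightarrow\Hom_R(R[t^{-1}],C)=0$ by left exactness, so $\Hom_R(R[t^{-1}],M)=0$; but $tM=M$ lets one build, from any $x_0\in M$, a sequence $x_{n+1}\in M$ with $tx_{n+1}=x_n$, which is an element of $\Hom_R(R[t^{-1}],M)$ mapping to~$x_0$. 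Hence $x_0=0$ and $M=0$.

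For the inductive step, set $J=(s_2,\dotsc,s_m)$. The quotient $C/s_1C=\coker(s_1\:C\rarrow C)$ is again an $I$\+contramodule (the class is closed under cokernels), hence a $J$\+contramodule, and $s_1$ acts on it by zero. The image $\overline M$ of $M$ in $C/s_1C$ therefore satisfies $J\overline M=I\overline M=\overline M$, so the induction hypothesis (the submodule statement for $J$) gives $\overline M=0$, that is, $M\subseteq s_1C$. Since $s_1C=\ker(C\rarrow C/s_1C)$ is also an $I$\+contramodule, I can repeat the argument with $C$ replaced by $s_1C$, and so on, obtaining $M\subseteq\bigcap_{n\ge1}s_1^nC$.

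The hard part will be the last step: deducing $M=0$ from $M\subseteq\bigcap_n s_1^nC$ together with $IM=M$. One cannot simply claim $\bigcap_n s_1^nC=0$, because an $I$\+contramodule is ``complete'' but need not be ``separated'', so this intersection may be nonzero; the separatedness input $\Hom_R(R[s_1^{-1}],C)=0$ only kills \emph{coherent} $s_1$\+towers, while membership in $\bigcap_n s_1^nC$ supplies, a priori, only incoherent divisions. This is exactly where the second half of the contramodule condition, $\Ext^1_R(R[s_1^{-1}],C)=0$, must enter, and where the argument must use more than the weak Nakayama lemmas.

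Concretely, I expect to handle this crux through local cohomology rather than through the intersection directly. Using Lemma~\ref{ext-from-invertible-to-contra}, which yields $\Ext^i_R(D,C)=0$ for every $R[t^{-1}]$\+module~$D$, the two-term complex $(B\rarrow B[s_1^{-1}])$ computing $R\Gamma_{s_1}(B)$ gives $\RHom_R(B,C)\simeq\RHom_R(\Gamma_{s_1}(B),C)$ (both $B[s_1^{-1}]$ and the cokernel $H^1_{s_1}(B)$ are $R[s_1^{-1}]$\+modules and so disappear); iterating over $s_1,\dotsc,s_m$ reduces $\Hom_R(B,C)$ to $\Hom_R(\Gamma_I(B),C)$, where $\Gamma_I(B)$ is the $I$\+power-torsion submodule. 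The remaining task is then to show this last $\Hom$ vanishes, for which the natural route is to prove that $\Gamma_I(B)$ inherits $I$\+divisibility from $IB=B$ (immediate in each single-variable reduction, but requiring care across several generators) and to finish, on the resulting $I$\+divisible $I$\+torsion module, by the coherent-tower mechanism of the base case. Establishing the $I$\+divisibility of $\Gamma_I(B)$, or otherwise converting the incoherent divisibility into an element of the vanishing group via the $\varprojlim$–$\varprojlim^1$ telescope sequence controlled by $\Ext^1_R(R[s_1^{-1}],C)=0$, is the point I expect to demand the most work.
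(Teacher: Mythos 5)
Your reduction to the submodule statement, your base case $m=1$, and the first half of your inductive step are all correct, but the proof is not finished: you explicitly defer the decisive step (``the point I expect to demand the most work''), and that step is exactly where the content of the proposition lies. Having arrived at $M\subseteq\bigcap_{n\ge1}s_1^nC$, you rightly observe that you cannot conclude $M=0$, since an $I$\+contramodule need not be $s_1$\+adically separated. The local cohomology detour you sketch only trades the problem for another of the same kind: the reduction $\Hom_R(B,C)\simeq\Hom_R(\Gamma_I(B),C)$ does go through (iterate the torsion/localization triangle against Lemma~\ref{ext-from-invertible-to-contra}), but you are then left with proving that this last Hom vanishes, and for $m\ge2$ the submodule $\Gamma_I(B)$ does not obviously inherit $I$\+divisibility from $B$; even if it did, you would be facing the original statement again for an $I$\+torsion $I$\+divisible module, which is not an easier case. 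So there is a genuine gap.

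The idea you are missing is to transport the divisibility hypothesis from $B$ into the contramodule world, instead of intersecting a divisible submodule with a contramodule. The paper uses the reflector $\Delta_I\:R\Modl\rarrow R\Modl_{I\ctra}$, the left adjoint to the inclusion of the full subcategory of $I$\+contramodule $R$\+modules, constructed in~\cite[Theorem~7.2]{Pcta}. Every morphism $B\rarrow C$ into an $I$\+contramodule factors through the adjunction unit $B\rarrow\Delta_I(B)$. The functor $\Delta_I$ is $R$\+linear and right exact, so it carries the surjection $(s_1,\dotsc,s_m)\:B^m\rarrow B$ (surjective precisely because $IB=B$) to a surjection $\Delta_I(B)^m\rarrow\Delta_I(B)$; hence $\Delta_I(B)/I\Delta_I(B)=0$. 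Since $\Delta_I(B)$ \emph{is} an $I$\+contramodule, Lemma~\ref{fin-gen-ideal-weak-nakayama} now applies to it directly and yields $\Delta_I(B)=0$, whence $\Hom_R(B,C)=0$. This sidesteps the separatedness issue entirely: the weak Nakayama lemma is applied to a module that is simultaneously an $I$\+contramodule and $I$\+divisible, which is exactly the configuration your induction could never reach, because $\phi(B)\subseteq C$ is divisible but not a contramodule, while $C$ is a contramodule but not divisible. (The paper also indicates an alternative proof via the infinite summation operations of~\cite[Lemma~4.2]{Pcta}.)
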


\begin{proof}
 In other words, the proposition says that an $I$\+contramodule
$R$\+module $C$ has no $R$\+submodules $D$ for which $ID=D$.
 This is provable using infinite summation operations, as hinted
in~\cite[Lemma~4.2]{Pcta}.

 Alternatively, one can use the reflector $\Delta_I\:R\Modl\rarrow
R\Modl_{I\ctra}$ onto the full subcategory of $I$\+contramodule
$R$\+modules $R\Modl_{I\ctra}\subset R\Modl$ (that is, the left
adjoint functor to the inclusion $R\Modl_{I\ctra}\rarrow R\Modl$).
 The functor $\Delta_I$ was constructed in~\cite[Theorem~7.2]{Pcta}.
 Any $R$\+module morphism $B\rarrow C$ into an $I$\+contramodule
$R$\+module $C$ factorizes uniquely as $B\rarrow\Delta_I(B)\rarrow C$,
where $B\rarrow\Delta_I(B)$ is the adjunction morphism.

 The functor $\Delta_I$ is $R$\+linear and right exact, so applying it
to the morphism $(s_1,\dotsc,s_m)\:B^m\rarrow B$, which is surjective
by the assumption $IB=B$, produces a surjective morphism
$(s_1,\dotsc,s_m)\:\Delta_I(B)^m\rarrow\Delta_I(B)$.
 Hence $B/IB=0$ implies $\Delta_I(B)/I\Delta_I(B)=0$.
 But $\Delta_I(B)$ is an $I$\+contra\-module, so by
Lemma~\ref{fin-gen-ideal-weak-nakayama} it follows that
that $\Delta_I(B)=0$.
 Thus any $R$\+module morphism $B\rarrow C$ vanishes.
\end{proof}

\begin{proof}[Second proof of Corollary~\ref{divisible-cor}]
 For any $R$\+module $B$, there is a natural 5\+term exact sequence of
$R$\+modules
\begin{multline} \tag{$*$}
 0\lrarrow\Ext^0_R(K_u^\bu,B)\lrarrow\Hom_R(U,B)\lrarrow B \\
 \lrarrow\Ext^1_R(K_u^\bu,B)\lrarrow\Ext^1_R(U,B)\lrarrow 0
\end{multline}
produced by applying the functor $\Hom_{\sD(R\Modl)}({-},B)$ to
the distinguished triangle
$$
 R\lrarrow U\lrarrow K_u^\bu\lrarrow R[1]
$$
in $\sD(R\Modl)$ (cf.~\cite[formula~($**$) in Section~4]{BP},
\cite[formula~(8.2)]{Pcoun}, or~\cite[formula~(9)]{BP2}).
 By Corollary~\ref{ext-from-K-decomp}, we have an isomorphism
of $R$\+modules $\Ext^1_R(K_u^\bu,B)\simeq\prod_{\m\in\G\cap\Max R}
C(\m)$, where $C(\m)$ are some $\m$\+contramodule $R$\+modules.

 Now assume that the $R$\+module $B$ is $\G$\+divisible.
 Given a maximal ideal $\m\in\G\cap\Max R$, choose a finitely generated
ideal $I\in\G$ such that $I\subset\m$.
 Then $C(\m)$ is an $I$\+contramodule $R$\+module and $B/IB=0$.
 By Proposition~\ref{strong-nakayama}, it follows that
$\Hom_R(B,C(\m))=0$.
 As this holds for all $\m\in\G\cap\Max R$, we can conclude that
the map $B\rarrow\Ext^1_R(K_u^\bu,B)$ vanishes.
 Hence the map $\Hom_R(U,B)\rarrow B$ is surjective and the $R$\+module
$B$ is $u$\+divisible.
\end{proof}

\Section{$u$-Contramodule $R$-Modules}

 In this section we obtain a description of the Geigle--Lenzing
perpendicular subcategory $R\Modl_{u\ctra}=U^{\perp_{0,1}}$ in
the category of $R$\+modules $R\Modl$.

 An $R$\+module $C$ is said to be
a \emph{$u$\+contramodule}~\cite[Section~1]{BP2} if
$C\in U^{\perp_{0,1}}$, that is $\Hom_R(U,C)=0=\Ext^1_R(U,C)$.
 Assuming Setup~\ref{thesetup}, the $R$\+module $U$ has projective
dimension at most~$1$ by Theorem~\ref{projdim1-theorem}.
 By~\cite[Proposition~1.1]{GL} or~\cite[Theorem~1.2(a)]{Pcta}, it
follows that the full subcategory of $u$\+contramodule $R$\+modules
$R\Modl_{u\ctra}$ is closed under the kernels, cokernels, extensions,
and infinite products in $R\Modl$.

 The functor $\Delta_u=\Ext^1_R(K_u^\bu,{-})$ plays an important role
as the reflector onto the full subategory $R\Modl_{u\ctra}\subset
R\Modl$ \,\cite[Proposition~3.2(b)]{BP2}.

\begin{lem} \label{R/I-mod-u-contra}
 Let $u\:R\rarrow U$ be a flat epimorphism of commutative rings and
$I\subset R$ be an ideal such that $IU=U$.
 Then any $R/I$\+module is a $u$\+contramodule.
\end{lem}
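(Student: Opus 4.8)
The plan is to verify the two defining conditions of a $u$\+contramodule, namely $\Hom_R(U,C)=0$ and $\Ext^1_R(U,C)=0$, for an arbitrary $R/I$\+module $C$; in fact I expect to prove the stronger statement that $\Ext^n_R(U,C)=0$ for all $n\ge0$ in one stroke. The single idea driving everything is that the hypotheses force the derived restriction of $U$ along $R\rarrow R/I$ to vanish. Indeed, since $U$ is a flat $R$\+module, $\Tor^R_n(R/I,U)=0$ for all $n\ge1$, while $\Tor^R_0(R/I,U)=R/I\ot_RU=U/IU=0$ because $IU=U$. Thus $\Tor^R_n(R/I,U)=0$ for every $n\ge0$.

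To convert this into the desired Ext vanishing, I would choose a projective (say, free) resolution $P_\bu\rarrow U$ of $U$ over $R$ and exploit that $C$ carries an $R/I$\+module structure. By the extension/restriction-of-scalars adjunction applied termwise, there is a natural isomorphism of cochain complexes $\Hom_R(P_\bu,C)\simeq\Hom_{R/I}(R/I\ot_RP_\bu,\>C)$, valid because each term of $P_\bu$ is $R$\+projective and $C$ is an $R/I$\+module. The complex $R/I\ot_RP_\bu$ is a bounded-below complex of projective $R/I$\+modules whose homology computes $\Tor^R_\ast(R/I,U)$, which we have just seen is identically zero. A bounded-below acyclic complex of projectives is contractible, so $R/I\ot_RP_\bu$ is contractible, and hence so is $\Hom_{R/I}(R/I\ot_RP_\bu,\>C)$. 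Therefore $\Ext^n_R(U,C)=H^n\bigl(\Hom_R(P_\bu,C)\bigr)=0$ for every $n\ge0$. In particular $\Hom_R(U,C)=0=\Ext^1_R(U,C)$, which is precisely the assertion that $C\in U^{\perp_{0,1}}$ is a $u$\+contramodule.

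I do not anticipate any serious obstacle: all the content sits in the opening observation that $R/I\ot_RU=0$ (a direct consequence of $IU=U$) together with the flatness of $U$, which jointly annihilate the entire derived tensor product $R/I\ot^{\mathbf L}_RU$. The only points deserving a word of care are the termwise adjunction isomorphism and the standard fact that a bounded-below acyclic complex of projectives is null-homotopic. Conceptually the whole argument collapses to the single derived-adjunction identity $\mathbf R\Hom_R(U,C)\simeq\mathbf R\Hom_{R/I}(R/I\ot^{\mathbf L}_RU,\>C)=\mathbf R\Hom_{R/I}(0,C)=0$, and the projective-resolution presentation above is simply an elementary unwinding of it that avoids explicit derived-category machinery. (As a sanity check, the vanishing $\Hom_R(U,C)=0$ also admits a one-line direct proof: any $R$\+linear map $U\rarrow C$ kills every $u\in U$, since writing $u=\sum_i a_iv_i$ with $a_i\in I$ and $v_i\in U$ yields an image lying in $IC=0$.)
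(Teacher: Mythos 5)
Your argument is correct and is essentially the paper's own proof: the paper simply cites a lemma from Positselski--Sl\'avik asserting $\Ext^i_R(U,D)\simeq\Ext^i_{R/I}(U/IU,\>D)$ for flat $R$\+modules $U$ and $R/I$\+modules $D$, and then observes that $U/IU=0$, whereas you unwind that cited isomorphism from scratch via the tensored projective resolution and the adjunction. (A minor remark: projectivity of the terms of $P_\bu$ is not needed for the termwise adjunction isomorphism itself, only for the resolution to compute $\Ext$ and for the contractibility of the acyclic complex $R/I\ot_RP_\bu$.)
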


\begin{proof}
 Since $U$ is a flat $R$\+module, by~\cite[Lemma~4.1(a)]{PSl} we have
$\Ext^i_R(U,D)\simeq\Ext^i_{R/I}(U/IU,D)=0$ for any $R/I$\+module $D$
and all $i\ge0$.
\end{proof}

\begin{prop} \label{I-contra-u-contra}
 Let $R\rarrow U$ be a flat epimorphism of commutative rings, and
let\/ $\G$ be the related perfect Gabriel filter of ideals in~$R$.
 Assume that the projective dimension of the $R$\+module $U$ does not
exceed~$1$.
 Then, for any ideal $I\in\G$, any $I$\+contramodule $R$\+module is
a $u$\+contramodule.
\end{prop}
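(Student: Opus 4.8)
The plan is to realize the $R$\+module $U$ as a finite Čech complex of localizations and then apply Lemma~\ref{ext-from-invertible-to-contra} term by term. First I would reduce to a finitely generated ideal: since the perfect Gabriel filter $\G$ has a base of finitely generated ideals, I can choose a finitely generated ideal $I_0=(s_1,\dotsc,s_m)\in\G$ with $I_0\subset I$. Any $I$\+contramodule $R$\+module $C$ is then in particular an $I_0$\+contramodule, being a $t$\+contramodule for every $t\in I_0\subset I$; so it suffices to treat the case where $C$ is an $I_0$\+contramodule. The crucial observation is that $I_0\in\G$ means $I_0U=U$, hence $1\in s_1U+\dotsb+s_mU$; that is, the elements $s_1,\dotsc,s_m$ generate the unit ideal in the ring~$U$.

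Because $s_1,\dotsc,s_m$ generate the unit ideal in $U$, the augmented Čech complex
$$
 0\rarrow U\rarrow\bigoplus_i U[s_i^{-1}]\rarrow
 \bigoplus_{i<j}U[s_i^{-1},s_j^{-1}]\rarrow\dotsb\rarrow
 U[s_1^{-1},\dotsc,s_m^{-1}]\rarrow 0
$$
is exact (this is the standard Čech complex associated with the partition of unity $1=\sum_i s_iu_i$ in $U$, which also provides a contracting homotopy). Denoting by $D^\bu$ the unaugmented complex $\bigl(\bigoplus_i U[s_i^{-1}]\to\dotsb\to U[s_1^{-1},\dotsc,s_m^{-1}]\bigr)$, placed in cohomological degrees $0$ through $m-1$, the augmentation is a quasi\+isomorphism $U\rarrow D^\bu$. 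Hence $U\simeq D^\bu$ in $\sD(R\Modl)$, and consequently $\Ext^n_R(U,C)\simeq\Ext^n_R(D^\bu,C)$ for every~$n$.

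Now each term of $D^\bu$ is a localization $U[s_{i_0}^{-1},\dotsc,s_{i_p}^{-1}]$ in which the element $s_{i_0}\in I_0\subset I$ acts invertibly; it is therefore an $R[s_{i_0}^{-1}]$\+module. Since $C$ is in particular an $s_{i_0}$\+contramodule, Lemma~\ref{ext-from-invertible-to-contra} gives $\Ext^i_R\bigl(U[s_{i_0}^{-1},\dotsc,s_{i_p}^{-1}],\,C\bigr)=0$ for all $i\ge0$. As each of the finitely many terms $D^p$ of the bounded complex $D^\bu$ satisfies $\Ext^i_R(D^p,C)=0$ for all~$i$, an induction on the number of terms — using the distinguished triangles relating $D^\bu$ to its stupid truncations, equivalently the spectral sequence assembled from the groups $\Ext^q_R(D^p,C)$ — yields $\Ext^n_R(D^\bu,C)=0$, and hence $\Ext^n_R(U,C)=0$, for every $n\ge0$. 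In particular $\Hom_R(U,C)=0=\Ext^1_R(U,C)$, which is exactly the assertion that $C$ is a $u$\+contramodule.

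The only substantive step is the translation of the filter condition $I_0U=U$ into the statement that the generators $s_i$ generate the unit ideal of~$U$; this is precisely what turns the Čech complex into a resolution of $U$, and everything afterwards is a formal termwise application of Lemma~\ref{ext-from-invertible-to-contra} together with dévissage over the finite complex $D^\bu$. I would point out that this argument in fact produces the stronger vanishing $\Ext^n_R(U,C)=0$ for \emph{all} $n\ge0$ and uses neither the hypothesis $\pd_RU\le1$ nor the $\G$\+h\+nil condition of Setup~\ref{thesetup}; the projective dimension bound is presumably invoked only so that the ambient perpendicular category $R\Modl_{u\ctra}$ is abelian and well\+behaved.
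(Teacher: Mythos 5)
Your argument is correct, and it takes a genuinely different route from the paper's. The paper splits the claim into two halves: it gets $\Hom_R(U,C)=0$ from Proposition~\ref{strong-nakayama} (via the reflector $\Delta_I$ applied to $B=U$, using $U/IU=0$), and it gets $\Ext^1_R(U,C)=0$ by showing that every $I$\+contramodule is \emph{simply right obtainable} from $R/I$\+modules and that $R/I$\+modules lie in $U^{\perp_{\ge0}}$ (Lemma~\ref{R/I-mod-u-contra}), the orthogonality being preserved under obtainability. You instead pick finitely many generators $s_1,\dotsc,s_m$ of an ideal $I_0\in\G$, $I_0\subset I$, observe that they generate the unit ideal of $U$, and replace $U$ by the exact augmented \v Cech complex of localizations $U[s_{i_0}^{-1}\dotsm s_{i_p}^{-1}]$; each term is an $R[s_{i_0}^{-1}]$\+module, so Lemma~\ref{ext-from-invertible-to-contra} kills all Ext groups termwise, and a finite d\'evissage over the bounded complex gives $\Ext^n_R(U,C)=0$ for all $n\ge0$. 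This is a clean, essentially self-contained proof that avoids the obtainability machinery of~\cite{PSl,PSl2}; both arguments in fact yield the full vanishing $\Ext^n_R(U,C)=0$ for all~$n$, and neither genuinely uses the hypothesis $\pd_RU\le1$, which (as you suspect) is carried in the statement mainly so that $R\Modl_{u\ctra}$ is known to be a well-behaved reflective abelian subcategory in the surrounding discussion. One small point of hygiene: the partition of unity $1=\sum_i s_iu_i$ does not literally give a contracting homotopy by multiplication by the~$u_i$ (the maps go between different localizations); the cleanest justification of exactness is that the complex becomes contractible after localizing at each~$s_j$, and the $D(s_j)$ cover $\operatorname{Spec}U$ since $(s_1,\dotsc,s_m)U=U$. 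This is standard and does not affect the validity of your proof.
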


\begin{proof}
 Without loss of generality we can assume the ideal $I$ to be finitely
generated.
 Let $C$ be an $I$\+contramodule $R$\+module.
 Then $\Hom_R(U,C)=0$ by Proposition~\ref{strong-nakayama}.

 There are several ways to prove that $\Ext^1_R(U,C)=0$.
 One can observe that any $I$\+contramodule $R$\+module is obtainable
as an extension of two \emph{quotseparated} $I$\+contramodule
$R$\+modules~\cite[Section~5.5]{PSl}, \cite[Section~1]{Pdc};
any quotseparated $I$\+contramodule $R$\+module is the cokernel of
an injective morphism between two $I$\+adically separated and complete
modules; and finally any $I$\+adically separated and complete module is
the kernel of a morphism of $R$\+modules of the form
$\Hom_\boZ(N,\boQ/\boZ)$, where $N$ ranges over
the $\G$\+torsion $R$\+modules~\cite[Proposition~5.6]{Pcoun}.
 Since $R$\+modules of the latter form belong to $R\Modl_{u\ctra}$,
it follows that all the $I$\+contramodule $R$\+modules do.

 Alternatively, one can say that all $I$\+contramodule $R$\+modules $C$
are \emph{simply right obtainable} from $R/I$\+modules (i.~e.,
obtainable using the passages to extensions, cokernels of monomorphisms,
infinite products, and infinitely iterated extensions in the sense of
the projective limit~\cite[Section~3]{PSl}, \cite[Section~2]{PSl2}).
 This is the assertion of~\cite[Lemma~8.2]{PSl} (see also~\cite[proof of
Theorem~9.5]{Pcta}).
 Since $\Ext^i_R(U,D)=0$ for any $R/I$\+module $D$ and all $i>0$
by Lemma~\ref{R/I-mod-u-contra}, it follows that
$\Ext^i_R(U,C)=0$ for $i>0$ by~\cite[Lemma~3.4]{PSl}.
\end{proof}

\begin{thm} \label{u-contramodules-described}
 Assuming Setup~\ref{thesetup}, an $R$\+module $C$ is
a $u$\+contramodule if and only if it is isomorphic to a product of
some\/ $\m$\+contramodule $R$\+modules $C(\m)$ over the maximal ideals\/
$\m\in\G\cap\Max R$,
$$
 C\,\simeq\,\prod_{\m\in\G\cap\Max R} C(\m).
$$
 Such an infinite product decomposition is unique and functorial when
it exists, and the $R$\+modules $C(\m)$ can be recovered as
the colocalizations $C(\m)=\Hom_R(R_\m,C)$.
\end{thm}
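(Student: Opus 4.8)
The plan is to prove the two implications separately and then establish the recovery formula $C(\m)\simeq\Hom_R(R_\m,C)$, which will give uniqueness and functoriality at one stroke. For the ``if'' direction I would proceed as follows. Fix a maximal ideal $\m\in\G\cap\Max R$ and, using that $\G$ has a base of finitely generated ideals, choose a finitely generated $I\in\G$ with $I\subset\m$. An $\m$\+contramodule $R$\+module $C(\m)$ is then in particular an $I$\+contramodule, so Proposition~\ref{I-contra-u-contra} shows that each $C(\m)$ is a $u$\+contramodule. Since $R\Modl_{u\ctra}$ is closed under infinite products in $R\Modl$ (as recalled just after Theorem~\ref{projdim1-theorem}), the product $\prod_\m C(\m)$ is again a $u$\+contramodule. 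This settles one direction.

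For the ``only if'' direction the decisive tool is the reflector $\Delta_u=\Ext^1_R(K_u^\bu,{-})$ onto the full subcategory $R\Modl_{u\ctra}$. If $C$ is a $u$\+contramodule, then, the subcategory being full and reflective, the adjunction unit $C\rarrow\Delta_u(C)$ is an isomorphism; hence $C\simeq\Ext^1_R(K_u^\bu,C)$. Now I would apply Corollary~\ref{ext-from-K-decomp} with $B=C$ and $n=1$, which presents $\Ext^1_R(K_u^\bu,C)$, and therefore $C$ itself, as a product $\prod_{\m\in\G\cap\Max R}C(\m)$ of $\m$\+contramodule $R$\+modules. This is exactly the asserted decomposition.

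It remains to recover the factors functorially. Suppose $C\simeq\prod_\n C(\n)$ with each $C(\n)$ an $\n$\+contramodule. Since $\Hom_R(R_\m,{-})$ commutes with products, $\Hom_R(R_\m,C)\simeq\prod_\n\Hom_R(R_\m,C(\n))$. For $\n\ne\m$ I would choose $s\in\n\setminus\m$ (possible as both are maximal): then $R_\m$ is an $R[s^{-1}]$\+module while $C(\n)$ is an $s$\+contramodule, so the corresponding factor vanishes by Lemma~\ref{ext-from-invertible-to-contra} in the case $i=0$. For $\n=\m$, the module $C(\m)$ is an $R_\m$\+module by Lemma~\ref{maximal-ideal-contramodule}, and since $R\rarrow R_\m$ is a ring epimorphism one has $\Hom_R(R_\m,C(\m))=\Hom_{R_\m}(R_\m,C(\m))=C(\m)$. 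Thus $C(\m)\simeq\Hom_R(R_\m,C)$, which is manifestly functorial in $C$ and forces the decomposition to be unique.

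The main obstacle, such as it is, lies in the ``only if'' step: one must combine the two facts that $\Delta_u=\Ext^1_R(K_u^\bu,{-})$ really is the reflector onto $R\Modl_{u\ctra}$ (so its unit is invertible on $u$\+contramodules, using $\pd_RU\le1$ from Theorem~\ref{projdim1-theorem}) and that Corollary~\ref{ext-from-K-decomp} outputs precisely a product of $\m$\+contramodules indexed by $\G\cap\Max R$. Once these are in hand the decomposition is immediate, and the recovery formula reduces to the essentially formal computation of $\Hom_R(R_\m,{-})$ on the product, as above.
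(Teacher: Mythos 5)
Your proof is correct and follows essentially the same route as the paper: the ``if'' direction via Proposition~\ref{I-contra-u-contra} and closure of $R\Modl_{u\ctra}$ under products, the ``only if'' direction via the isomorphism $C\simeq\Ext^1_R(K_u^\bu,C)$ combined with Corollary~\ref{ext-from-K-decomp}, and the colocalization formula via Lemmas~\ref{maximal-ideal-contramodule} and~\ref{ext-from-invertible-to-contra}. The only cosmetic difference is that you deduce uniqueness and functoriality from the recovery formula $C(\m)\simeq\Hom_R(R_\m,C)$, whereas the paper invokes Proposition~\ref{ext-between-products} for $i=0$; both justifications are sound.
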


\begin{proof}
 The assertion ``if'' is provided by
Proposition~\ref{I-contra-u-contra}.
 To prove the ``only if'', notice that the natural morphism
$C\rarrow\Ext^1_R(K_u^\bu,C)$ is an isomorphism for any
$u$\+contramodule $R$\+module $C$ (in view of the exact
sequence~($*$) from Section~\ref{divisible-secn}).
 Now Corollary~\ref{ext-from-K-decomp} provides the desired direct
product decomposition.

 Let us compute the colocalizations.
 Any $\m$\+contramodule $R$\+module $C(\m)$ is an $R_\m$\+module by
Lemma~\ref{maximal-ideal-contramodule}, hence
$\Hom_R(R_\m,C(\m))=C(\m)$.
 On the other hand, for any maximal ideal $\n\ne\m$ of the ring $R$,
we have $\Hom_R(R_\n,C(\m))=0$ by
Lemma~\ref{ext-from-invertible-to-contra} (choose any element
$t\in\m\setminus\n$).
 Finally, our direct product decomposition is unique and functorial by
Proposition~\ref{ext-between-products} (for $i=0$).
\end{proof}

 Thus, assuming Setup~\ref{thesetup}, the category of $u$\+contramodule
$R$\+modules is equivalent to the Cartesian product of the categories
of $\m$\+contramodule $R$\+modules, taken over the maximal ideals~$\m$
of the ring $R$ belonging to the filter~$\G$.
 The product functor
$$
 (C(\m))_{\m\in\G\cap\Max R}\,\longmapsto\,\prod_{\m\in\G\cap\Max R}
 C(\m)
$$
establishes the equivalence, with an inverse equivalence provided by
the colocalization functor
$$
 C\,\longmapsto\,(C^\m=\Hom_R(R_\m,C))_{\m\in\G\cap\Max R}.
$$

\Section{$U$-Strongly Flat $R$-Modules}

 We refer to~\cite[beginning of Section~9]{Pcoun} for a general
discussion of $U$\+weakly cotorsion and $U$\+strongly flat left
$R$\+modules for a left flat epimorphism of associative rings
$u\:R\rarrow U$.
 In this section, as in the rest of this paper, we restrict ourselves
to the commutative case.

 Let $u\:R\rarrow U$ be a flat epimorphism of commutative rings.
 A left $R$\+module $C$ is said to be \emph{$U$\+weakly cotorsion}
if $\Ext^1_R(U,C)=0$.
 A left $R$\+module $F$ is said to be \emph{$U$\+strongly flat} if
$\Ext^1_R(F,C)=0$ for all $U$\+weakly cotorsion $R$\+modules~$C$.

 By~\cite[Theorem~4.4]{GL}, we have $\Ext^i_R(U,U^{(X)})\simeq
\Ext^i_U(U,U^{(X)})=0$ for all $i>0$ and any set $X$;
in particular, $\Ext^1_R(U,U^{(X)})=0$ (where $U^{(X)}$ denotes
the direct sum of $X$ copies of~$U$).
 Hence~\cite[Corollary~6.13]{GT} provides the following description
of $U$\+strongly flat $R$\+modules.
 An $R$\+module $F$ is $U$\+strongly flat if and only if it is
a direct summand of an $R$\+module $G$ appearing in a short exact
sequence of $R$\+modules
$$
 0\lrarrow V\lrarrow G\lrarrow W\lrarrow 0,
$$
where $V$ is a free $R$\+module and $W$ is a free $U$\+module.

 The notion of a \emph{simply right obtainable} module (from a given
class of modules) was already mentioned in the proof of
Proposition~\ref{I-contra-u-contra}.
 We refer to~\cite[Section~3]{PSl} or~\cite[Section~2]{PSl2} for
a detailed discussion.

\begin{prop} \label{u-contramodules-obtainable}
 Assuming Setup~\ref{thesetup}, the class of all $u$\+contramodule
$R$\+modules $R\Modl_{u\ctra}\subset R\Modl$ coincides with the class
of all $R$\+modules simply right obtainable from $R/I$\+modules, where
$I$ ranges over the Gabriel filter\/~$\G$.
\end{prop}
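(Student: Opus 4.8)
The plan is to prove the two inclusions separately. Write $\mathcal A=R\Modl_{u\ctra}$ for the class of $u$\+contramodule $R$\+modules and $\mathcal B$ for the class of $R$\+modules simply right obtainable from $R/I$\+modules with $I\in\G$.

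First I would establish $\mathcal B\subseteq\mathcal A$. By Lemma~\ref{R/I-mod-u-contra}, every $R/I$\+module with $I\in\G$ is a $u$\+contramodule, so the generating class for $\mathcal B$ is contained in $\mathcal A$. As recalled before Lemma~\ref{R/I-mod-u-contra}, the subcategory $R\Modl_{u\ctra}$ is closed under kernels, cokernels, extensions, and infinite products in $R\Modl$ (this uses $\pd_R U\le1$, which holds by Theorem~\ref{projdim1-theorem}). These closure properties subsume all four operations defining simple right obtainability: cokernels of monomorphisms are a special case of cokernels, while an infinitely iterated extension in the sense of the projective limit is an inverse limit, hence a kernel of a morphism between products, so closure under kernels and products forces closure under such transfinite constructions. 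Therefore the smallest class containing the $R/I$\+modules and closed under these operations, namely $\mathcal B$, is contained in $\mathcal A$.

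Next I would prove the reverse inclusion $\mathcal A\subseteq\mathcal B$. Let $C$ be a $u$\+contramodule. By Theorem~\ref{u-contramodules-described}, there is a decomposition $C\simeq\prod_{\m\in\G\cap\Max R}C(\m)$, where each $C(\m)$ is an $\m$\+contramodule $R$\+module. For each maximal ideal $\m\in\G\cap\Max R$, I would use that $\G$ has a base of finitely generated ideals to choose a finitely generated ideal $I_\m\in\G$ with $I_\m\subseteq\m$; then $C(\m)$, being a $t$\+contramodule for every $t\in\m$, is in particular an $I_\m$\+contramodule. By \cite[Lemma~8.2]{PSl} (already invoked in the proof of Proposition~\ref{I-contra-u-contra}), every $I_\m$\+contramodule is simply right obtainable from $R/I_\m$\+modules, so $C(\m)\in\mathcal B$. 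Since $\mathcal B$ is closed under infinite products and contains each factor, the product $C\simeq\prod_\m C(\m)$ lies in $\mathcal B$.

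I expect the only genuinely delicate point to be the bookkeeping in the second inclusion: one must ensure that each $C(\m)$ is obtained from $R/I$\+modules with $I$ allowed to range over all of $\G$ (rather than from a single fixed ideal), which is precisely what lets the product over all $\m$ remain simply right obtainable from the full class $\{\,R/I\text{\+modules}:I\in\G\,\}$. All the substantive content — the projective-dimension bound, the product decomposition of Theorem~\ref{u-contramodules-described}, and the obtainability of $I$\+contramodules — is already in place, so the argument is essentially an assembly of established results.
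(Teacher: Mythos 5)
Your proposal is correct and follows essentially the same route as the paper: the forward inclusion via the product decomposition of Theorem~\ref{u-contramodules-described}, reduction of each factor to an $I$\+contramodule for a finitely generated $I\in\G$, and the cited obtainability lemma; the reverse inclusion via Lemma~\ref{R/I-mod-u-contra} together with the closure properties of $R\Modl_{u\ctra}$ guaranteed by $\pd_RU\le1$. The only difference is that you spell out why closure under kernels, extensions, and products subsumes infinitely iterated extensions in the sense of the projective limit, which the paper leaves to the cited references.
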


\begin{proof}
 By Theorem~\ref{u-contramodules-described}, any $u$\+contramodule
$R$\+module is obtainable as a product of $\m$\+contramodules over
the maximal ideals $\m\in\G\cap\Max R$.
 For any~$\m$, there exists a finitely generated ideal $I\subset\m$
such that $I\in\G$.
 Then any $\m$\+contramodule is an $I$\+contramodule.
 Finally, all $I$\+contramodule $R$\+modules are simply right
obtainable from $R/I$\+modules by~\cite[Lemma~8.2]{PSl2}
or~\cite[proof of Theorem~9.5]{Pcta}.

 Conversely, since by Theorem~\ref{projdim1-theorem} we have
$\pd_RU\le1$, the class of all $u$\+contramodule $R$\+modules is closed
under the kernels, cokernels, extensions, and infinite products in
$R\Modl$ by~\cite[Proposition~1.1]{GL} or~\cite[Theorem~1.2(a)]{Pcta}.
 Since all the $R/I$\+modules with $I\in\G$ belong to $R\Modl_{u\ctra}$
by Lemma~\ref{R/I-mod-u-contra}, it follows that all the $R$\+modules
simply right obtainable from $R/I$\+modules, $I\in\G$, belong to
$R\Modl_{u\ctra}$.
\end{proof}

\begin{prop} \label{weakly-cotorsion-obtainable}
 Assuming Setup~\ref{thesetup}, the class of all $U$\+weakly cotorsion
$R$\+modules coincides with the class of all $R$\+modules simply right
obtainable from $U$\+modules and $R/I$\+modules with $I\in\G$.
\end{prop}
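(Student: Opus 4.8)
The plan is to establish the two inclusions separately, following closely the pattern of the proof of Proposition~\ref{u-contramodules-obtainable}.

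First I would prove that every $R$\+module simply right obtainable from $U$\+modules and $R/I$\+modules with $I\in\G$ is $U$\+weakly cotorsion. For this it suffices to check that $\Ext^i_R(U,D)=0$ for all $i>0$ on the two generating classes and to propagate this vanishing. Any $U$\+module $D$ satisfies $\Ext^i_R(U,D)\simeq\Ext^i_U(U,D)=0$ for all $i>0$ (cf.~\cite[Theorem~4.4]{GL}), while any $R/I$\+module $D$ with $I\in\G$ satisfies $\Ext^i_R(U,D)=0$ for all $i\ge0$ by Lemma~\ref{R/I-mod-u-contra}. Hence the property that $\Ext^i_R(U,{-})$ vanishes for all $i>0$ holds on both generating classes, and by~\cite[Lemma~3.4]{PSl} it is inherited by every $R$\+module $C$ simply right obtainable from them. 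In particular $\Ext^1_R(U,C)=0$, so $C$ is $U$\+weakly cotorsion.

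The converse inclusion is the main content. Let $C$ be a $U$\+weakly cotorsion $R$\+module, so $\Ext^1_R(U,C)=0$. I would apply the natural exact sequence~($*$) from Section~\ref{divisible-secn} to the module $B=C$; since its rightmost nonzero term $\Ext^1_R(U,C)$ vanishes, it reduces to the exact sequence
$$
 0\lrarrow\Ext^0_R(K_u^\bu,C)\lrarrow\Hom_R(U,C)\lrarrow C
 \lrarrow\Ext^1_R(K_u^\bu,C)\lrarrow 0 .
$$
Writing $K\subset C$ for the image of the map $\Hom_R(U,C)\rarrow C$, this splits into two short exact sequences
$$
 0\lrarrow\Ext^0_R(K_u^\bu,C)\lrarrow\Hom_R(U,C)\lrarrow K\lrarrow 0
$$
and
$$
 0\lrarrow K\lrarrow C\lrarrow\Ext^1_R(K_u^\bu,C)\lrarrow 0 .
$$
The key point is that both $\Ext^0_R(K_u^\bu,C)$ and $\Ext^1_R(K_u^\bu,C)$ decompose as products of $\m$\+contramodule $R$\+modules over $\m\in\G\cap\Max R$ by Corollary~\ref{ext-from-K-decomp}, hence are $u$\+contramodules by Theorem~\ref{u-contramodules-described}, and are therefore simply right obtainable from the $R/I$\+modules with $I\in\G$ by Proposition~\ref{u-contramodules-obtainable} (and a fortiori from the $U$\+modules and $R/I$\+modules). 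On the other hand, $\Hom_R(U,C)$ is a $U$\+module, so it lies in the generating class. Applying the closure of the obtainable class under the operation of taking cokernels of monomorphisms to the first short exact sequence shows that $K$ is simply right obtainable from $U$\+modules and $R/I$\+modules; applying the closure under extensions to the second short exact sequence then shows that $C$ is as well.

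The step I expect to be the crux is the identification of the submodule $K=\operatorname{im}(\Hom_R(U,C)\rarrow C)$ as a module assembled from a $U$\+module and a $u$\+contramodule. What makes this possible is that not only the reflection $\Ext^1_R(K_u^\bu,C)$ but also the kernel term $\Ext^0_R(K_u^\bu,C)$ is a product of $\m$\+contramodules, so that the first short exact sequence can be fed into the cokernel-of-monomorphism operation. The hypothesis that $C$ is $U$\+weakly cotorsion enters exactly at the point where the vanishing of $\Ext^1_R(U,C)$ truncates~($*$), making $C\rarrow\Ext^1_R(K_u^\bu,C)$ surjective so that $K$ is genuinely the kernel of this surjection.
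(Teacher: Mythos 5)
Your proof is correct and follows essentially the same route as the paper: the converse inclusion via Lemma~\ref{R/I-mod-u-contra}, \cite[Theorem~4.4]{GL} and \cite[Lemma~3.4]{PSl}, and the forward inclusion by truncating the exact sequence~($*$) at $\Ext^1_R(U,C)=0$ and presenting $C$ as an extension of the $u$\+contramodule $\Ext^1_R(K_u^\bu,C)$ by the cokernel of the monomorphism $\Ext^0_R(K_u^\bu,C)\rarrow\Hom_R(U,C)$ from a $u$\+contramodule into a $U$\+module. This is exactly the paper's argument, differing only in that you route the $u$\+contramodule identification through Theorem~\ref{u-contramodules-described} rather than citing Corollary~\ref{ext-from-K-decomp} and Proposition~\ref{I-contra-u-contra} directly.
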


\begin{proof}
 For any $R$\+module $B$, it is clear from
Corollary~\ref{ext-from-K-decomp} and
Proposition~\ref{I-contra-u-contra} that both the $R$\+modules
$\Ext_R^0(K_u^\bu,B)$ and $\Ext_R^1(K_u^\bu,B)$ are
$u$\+contramodules (cf.~\cite[Lemma~2.6(a) and~(c)]{BP2}).
 Now it follows from the exact sequence~($*$) from
Section~\ref{divisible-secn} that any $U$\+weakly cotorsion
$R$\+module $C$ is obtainable as an extension of a $u$\+contramodule
$\Ext^1_R(K_u^\bu,C)$ and the cokernel of an injective morphism from
a $u$\+contramodule to a $U$\+module $\Ext^0_R(K_u^\bu,C)\rarrow
\Hom_R(U,C)$.
 It remains to use Proposition~\ref{u-contramodules-obtainable} for
obtainability of $u$\+contramodules.

 Conversely, all $R/I$\+modules are $U$\+weakly cotorsion $R$\+modules
by Lemma~\ref{R/I-mod-u-contra}, and all $U$\+modules $D$ are
$U$\+weakly cotorsion $R$\+modules, since $\Ext^i_R(U,D)\simeq
\Ext^i_U(U,D)=0$ for $i>0$ by~\cite[Theorem~4.4]{GL}.
 By virtue of~\cite[Lemma~3.4]{PSl}, it follows that all
the $R$\+modules simply right obtainable from $U$\+modules and
$R/I$\+modules are $U$\+weakly cotorsion.
\end{proof}

 The following theorem can be viewed as confirming a version
of~\cite[Optimistic Conjecture~1.1]{PSl2} for flat epimorphisms
of commutative rings (cf.~\cite[Corollary~9.7]{Pcoun}).
 It is also a generalization of~\cite[Proposition~7.13]{BP}.

\begin{thm} \label{strongly-flat-theorem}
 Assume Setup~\ref{thesetup}, and let $F$ be a flat $R$\+module.
 Then $F$ is $U$\+strongly flat if and only if it satisfies
the following two conditions:
\begin{enumerate}
\renewcommand{\theenumi}{\roman{enumi}}
\item the $U$\+module $U\ot_RF$ is projective;
\item for every ideal $I\in\G$, the $R/I$\+module $F/IF$ is
projective.
\end{enumerate}
\end{thm}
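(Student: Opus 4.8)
The plan is to prove the two implications separately, using the explicit two\+term description of $U$\+strongly flat modules (recalled at the beginning of this section) for the ``only if'' direction, and the description of the $U$\+weakly cotorsion modules as those simply right obtainable from $U$\+modules and $R/I$\+modules (Proposition~\ref{weakly-cotorsion-obtainable}) for the ``if'' direction.

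For ``only if'', suppose $F$ is a direct summand of an $R$\+module $G$ fitting into a short exact sequence $0\rarrow V\rarrow G\rarrow W\rarrow0$ with $V$ a free $R$\+module and $W$ a free $U$\+module. To obtain~(i), I would tensor this sequence with $U$ over $R$: flatness of $U$ keeps it exact, $U\ot_RV$ is a free $U$\+module, and $U\ot_RW\simeq W$ is a free $U$\+module since $u$~is a ring epimorphism (so $U\ot_RU\simeq U$). Hence the resulting sequence of $U$\+modules splits, $U\ot_RG$ is free over $U$, and its direct summand $U\ot_RF$ is projective over $U$. To obtain~(ii), fix $I\in\G$ and tensor the same sequence with $R/I$; as $W$ is a direct sum of copies of $U$, we have $R/I\ot_RU=0$ (because $IU=U$) and $\Tor^R_1(R/I,U)=0$ (because $U$ is flat), so the sequence degenerates to an isomorphism $G/IG\simeq V/IV$, a free $R/I$\+module. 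Thus $F/IF$, a direct summand of $G/IG$, is projective over $R/I$.

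For ``if'', by the very definition of $U$\+strong flatness it suffices to show $\Ext^1_R(F,C)=0$ for every $U$\+weakly cotorsion $R$\+module~$C$. By Proposition~\ref{weakly-cotorsion-obtainable} every such $C$ is simply right obtainable from $U$\+modules and $R/I$\+modules with $I\in\G$, so by the obtainability principle of~\cite[Lemma~3.4]{PSl} it is enough to verify that $\Ext^i_R(F,C)=0$ for all $i\ge1$ when $C$ is a $U$\+module and when $C$ is an $R/I$\+module. In the first case, flatness of $U$ makes $U\ot_RP_\bu$ a $U$\+projective resolution of $U\ot_RF$ for any $R$\+projective resolution $P_\bu$ of~$F$, whence $\Ext^i_R(F,C)\simeq\Ext^i_U(U\ot_RF,C)$, which vanishes for $i\ge1$ by condition~(i). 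In the second case, the change\+of\+rings spectral sequence $\Ext^p_{R/I}(\Tor^R_q(R/I,F),C)\Rightarrow\Ext^{p+q}_R(F,C)$ collapses because $F$ is flat, giving $\Ext^i_R(F,C)\simeq\Ext^i_{R/I}(F/IF,C)$, which vanishes for $i\ge1$ by condition~(ii).

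The step to get right is the reduction in the ``if'' direction: one must confirm that vanishing of \emph{all} the higher groups $\Ext^i_R(F,-)$, $i\ge1$, on the generating $U$\+modules and $R/I$\+modules propagates through extensions, cokernels of monomorphisms, infinite products, and transfinitely iterated (projective\+limit) extensions to the entire $U$\+weakly cotorsion class. The delicate case is the projective\+limit one: for a tower with surjective transition maps and kernels in the class, surjectivity of the induced maps on $\Hom_R(F,-)$ (which follows from $\Ext^1_R(F,\ker)=0$) forces the relevant $\varprojlim^1$ to vanish, so $\Ext^1_R(F,-)=0$ is preserved in the limit. This is precisely why it is worth proving~(i) and~(ii) for all $i\ge1$ rather than merely $i=1$, and it lets us avoid any separate verification that flat $R$\+modules have projective dimension~$\le1$.
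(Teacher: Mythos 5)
Your proposal is correct and follows essentially the same route as the paper: the ``only if'' direction is read off from the two\+term description of $U$\+strongly flat modules, and the ``if'' direction combines Proposition~\ref{weakly-cotorsion-obtainable} with the obtainability principle of~\cite[Lemma~3.4]{PSl} and the two change\+of\+rings isomorphisms $\Ext^i_R(F,D)\simeq\Ext^i_U(U\ot_RF,D)$ and $\Ext^i_R(F,D)\simeq\Ext^i_{R/I}(F/IF,D)$, which the paper cites from~\cite[Lemma~4.1(a)]{PSl} and you rederive directly. The extra details you supply (tensoring the defining sequence with $U$ and with $R/I$, and the remark on why one proves vanishing of $\Ext^i$ for all $i\ge1$ so that it propagates through the projective\+limit step of obtainability) are accurate elaborations of what the paper leaves implicit.
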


\begin{proof}
 The ``only if'' assertion holds for any flat epimorphism of
commutative rings $u\:R\rarrow U$ and the related perfect Gabriel
filter~$\G$.
 It is clear from the description of $U$\+strongly flat $R$\+modules
$F$ as the direct summands of the $R$\+modules $G$ appearing in
short exact sequences of $R$\+modules $0\rarrow V\rarrow G\rarrow W
\rarrow0$ with a free $R$\+module $V$ and a free $U$\+module $W$
that conditions~(i\+-ii) are satisfied for~$F$.

 Our proof of the ``if'' depends on the assumption of
Setup~\ref{thesetup}.
 Let $F$ be a flat $R$\+module satisfying~(i\+-ii), and let $C$
be a $U$\+weakly cotorsion $R$\+module.
 We need to show that $\Ext^1_R(F,C)=0$, but it will be more convenient
for us to prove that $\Ext^i_R(F,C)=0$ for all $i>0$.
 By Proposition~\ref{weakly-cotorsion-obtainable}, \,$C$~is simply right
obtainable from $U$\+modules and $R/I$\+modules with $I\in\G$.
 According to~\cite[Lemma~3.4]{PSl}, it suffices to consider the cases
when $C$ is either a $U$\+module or an $R/I$\+module.

 For any $U$\+module $D$, we have $\Ext^i_R(F,D)\simeq
\Ext^i_U(U\ot_RF,\>D)$ by~\cite[Lemma~4.1(a)]{PSl}.
 In view of condition~(i), \,$\Ext^i_U(U\ot_RF,\>D)=0$.
 Similarly, for any $R/I$\+module $D$, we have $\Ext^i_R(F,D)\simeq
\Ext^i_{R/I}(F/IF,D)$ by~\cite[Lemma~4.1(a)]{PSl}.
 In view of condition~(ii), \,$\Ext^i_{R/I}(F/IF,D)=0$.
\end{proof}

 Let $u\:R\rarrow U$ be a flat epimorphism of commutative rings
and $\G$ be the related perfect Gabriel filter of ideals in~$R$.
 Following~\cite[Sections~6 and~8]{BG2}, we will say that the ring
$R$ is \emph{$\G$\+almost perfect} if the ring $U=R_\G$ is perfect
and the rings $R/I$ are perfect for all $I\in\G$.
 Clearly, the ring epimorphism $u\:R\rarrow U$ satisfies
Setup~\ref{thesetup} whenever $R$ is $\G$\+almost perfect for
the related Gabriel filter~$\G$ (as all perfect commutative rings
are semilocal of Krull dimension zero).

\begin{cor} \label{almost-perfect-cor}
 Let\/ $\G$ be the perfect Gabriel filter of ideals related to
a flat epimorphism of commutative rings $u\:R\rarrow U$.
 Then the ring $R$ is\/ $\G$\+almost perfect if and only if all
the flat $R$\+modules are $U$\+strongly flat.
\end{cor}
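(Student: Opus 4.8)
The plan is to prove the two implications separately, both times through Theorem~\ref{strongly-flat-theorem}. For the ``if'' direction I would first note that $\G$\+almost perfectness already forces Setup~\ref{thesetup}: a perfect commutative ring is semilocal of Krull dimension zero, so if $U$ and all the quotients $R/I$, \ $I\in\G$, are perfect, then $R$ is $\G$\+h-nil. Now let $F$ be any flat $R$\+module. The base change $U\ot_RF$ is a flat $U$\+module (flatness is preserved under $R\rarrow U$), hence projective because $U$ is perfect; and $F/IF\simeq R/I\ot_RF$ is a flat $R/I$\+module, hence projective because $R/I$ is perfect. Thus conditions~(i) and~(ii) of Theorem~\ref{strongly-flat-theorem} hold for $F$, and the ``if'' part of that theorem (which is where Setup~\ref{thesetup} is used) shows that $F$ is $U$\+strongly flat.

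For the converse I would use only the \emph{necessity} of conditions~(i)--(ii), which holds for an arbitrary flat epimorphism, so that Setup~\ref{thesetup} need not be assumed in advance but rather emerges at the end. To see that $U$ is perfect, take a flat $U$\+module $N$; it is flat over $R$ by transitivity of flatness, hence $U$\+strongly flat by hypothesis, so condition~(i) makes $U\ot_RN$ a projective $U$\+module. Since $N$ is a $U$\+module and $u\:R\rarrow U$ is an epimorphism, the action map $U\ot_RN\rarrow N$ is an isomorphism, so $N$ itself is projective over $U$. As every flat $U$\+module is thus projective, $U$ is a perfect ring.

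The remaining and main point is that $R/I$ is perfect for every $I\in\G$. By Bass's theorem it is enough to show that every countably presented flat $R/I$\+module $N$ is projective (it suffices to test the countably presented flat Bass modules). Such an $N$ is a direct limit of a sequence $(R/I)^{n_1}\rarrow(R/I)^{n_2}\rarrow\dotsb$ of finitely generated free $R/I$\+modules, with transition maps given by matrices $\bar\phi_k$ over $R/I$. The key observation is that a chain-indexed direct limit carries no coherence obstruction to lifting, in contrast with a general Lazard presentation over a directed poset: lifting each matrix $\bar\phi_k$ entrywise to a map $\phi_k\:R^{n_k}\rarrow R^{n_{k+1}}$ and setting $F=\varinjlim(R^{n_1}\rarrow R^{n_2}\rarrow\dotsb)$ produces a flat $R$\+module (a direct limit of free modules) with $F/IF\simeq F\ot_RR/I\simeq N$, and no compatibility among the lifted maps has to be arranged. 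By hypothesis $F$ is $U$\+strongly flat, so the necessity of condition~(ii) in Theorem~\ref{strongly-flat-theorem} gives that $F/IF\simeq N$ is projective over $R/I$. Hence $R/I$ is perfect, and $R$ is $\G$\+almost perfect, completing the converse.
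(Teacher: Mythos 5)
Your proposal is correct and follows essentially the same route as the paper: the passage from $\G$\+almost perfectness to $U$\+strong flatness is the same application of Theorem~\ref{strongly-flat-theorem} (the paper calls this the ``only if'' direction, opposite to your labelling), and for the converse the paper likewise reduces to showing that flat $U$\+modules and Bass flat $R/I$\+modules are projective, explicitly skipping the ``straightforward details''. Your chain-lifting argument (lift the transition matrices of a countably presented flat $R/I$\+module to $R$, apply the necessity of condition~(ii), which indeed needs no Setup hypothesis) is a valid filling-in of exactly those skipped details.
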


\begin{proof}
 The ``if'' assertion is provable similarly to~\cite[Lemma~7.8]{BP}.
 One shows that all flat $U$\+modules are projective whenever all
flat $R$\+modules are $U$\+strongly flat, and all Bass flat
$R/I$\+modules are projective whenever all Bass flat $R$\+modules
are $U$\+strongly flat.
 We skip the (straightforward) details.

 The implication ``only if'' is a simple corollary of
Theorem~\ref{strongly-flat-theorem}.
 Assume that the ring $R$ is $\G$\+almost perfect; then
Setup~\ref{thesetup} is satisfied.
 Let $F$ be a flat $R$\+module.
 Then the $U$\+module $U\ot_RF$ is flat, and the $R/J$\+module
$F/JF$ is flat for every ideal $J\subset R$.
 Since the ring $U$ is perfect by assumption, it follows that
the $U$\+module $U\ot_RF$ is projective.
 Since the ring $R/I$ is perfect for all $I\in\G$, it follows
that the $R/I$\+module $F/IF$ is projective, too.
 Thus conditions~(i\+-ii) hold, and the $R$\+module $F$ is
$U$\+strongly flat by Theorem~\ref{strongly-flat-theorem}.
\end{proof}

\medskip

\end{document}